\newtheorem{theorem}{Theorem}
\newtheorem{proposition}[theorem]{Proposition}
\theoremstyle{definition}
\newtheorem{definition}[theorem]{Definition}
\newtheorem{remark}[theorem]{Remark}
\numberwithin{theorem}{section}
\theoremstyle{plain}
\newcommand{\PP}{\mathbb{P}}
\newcommand{\RR}{\mathbb{R}}
\newcommand{\CC}{\mathbb{C} }
\newcommand{\ZZ}{\mathbb{Z}}
\newcommand\CCt{{\mathbb C}\{\!\!\{t\}\!\!\}}
\newcommand\RRt{{\mathbb R}\{\!\!\{t\}\!\!\}}
\newcommand\polymake{{\texttt{polymake }}}
\DeclareMathOperator{\trop}{trop}
\DeclareMathOperator{\val}{val}
\newenvironment{customthm}[1]
{\innercustomthm}
{\endinnercustomthm}
\tikzstyle{startstop} = [minimum width=3cm, text width = 5cm, minimum height=1cm, text centered, draw=black]
\tikzstyle{arrow} = [thick, ->, >=stealth]
\algrenewcommand{\algorithmiccomment}[1]{\hfill $\rhd$ \emph{#1}}
\algrenewcommand{\algorithmicrequire}{\textbf{Input:}}
\algrenewcommand{\algorithmicensure}{\textbf{Output:}}
\algnewcommand{\Or}{\textbf{or}}
\algnewcommand{\And}{\textbf{and}}
\algnewcommand{\Not}{\textbf{not}\,}
\algnewcommand\algorithmicforeach{\textbf{for each}}
\def\NoNumber#1{{\def\alglinenumber##1{}\State #1}\addtocounter{ALG@line}{-1}}
\title{Real Tropical Quartics and their Bitangents: \\Counting with Patchworking}
\author{Alheydis Geiger }
\date{}
\begin{document}
	
	\maketitle
	\begin{abstract}
		For certain tropical quartic curves the existing techniques could not predict the lifting behavior of their bitangents over the real numbers. We close this gap     
		by using patchworking techniques. Further, this paper provides an analysis of the combinatorial types of real tropical quartic curves according to their real topology and number of real bitangents. This paper is accompanied by an extension for \texttt{polymake}; the computational data is available in the database collection \texttt{polyDB}.
	\end{abstract}
	
	\section{Introduction}
	
	Real bitangents of quartic curves have been studied in algebraic geometry for two centuries \cite{Zeu73,Plue34,Plue39,PlStVi11,He1855}.
	When tropical geometry and its interactions with algebraic geometry developed, the \emph{lifting} of tropical curves over different fields, i.e., finding algebraic curves that tropicalize to the given tropical curve, became more of a focus. 
	Next to classical counting problems like Gromov-Witten and Welschinger invariants \cite{Mik, IKS03}, relative lifting questions were prominent: Given a tropical variety $X$ with a fixed algebraic lift $C$ and another tropical variety $Y$ that satisfies some relation to $X$, can we find an algebraic variety $Z$ that is a lift of $Y$ and satisfies the same relation to $C$? Well-known such relative lifting problems are lines on a cubic surface \cite{Vig10,PaVi19,BK12,BS15,G20} or bitangents of a quartic curve \cite{LeMa19,LeLe18,1GP21}. The particular interest in these questions arises from the superabundance phenomenon in tropical geometry: There can be infinitely many tropical lines on a tropical cubic surface or tropical bitangents to a tropical quartic curve.
	
	Len and Markwig \cite{LeMa19}  first investigated the lifting of tropical bitangents to quartics over $\CC$ and discovered that after grouping together into 7 so-called \emph{bitangent classes}, each class had 1, 2 or 4 representatives that lifted with multiplicity 4, 2 or 1, respectively. Thus, they recovered Pl\"ucker's classical count of $28$ complex bitangents to a quartic \cite{Plue34,Plue39}. 
	Cueto and Markwig \cite{CueMa20} then classified the different combinatorial types of bitangent classes and their concrete lifting conditions over the real numbers in terms of sign conditions on the coefficients of the polynomial defining the algebraic quartic curve. Panizzut and the author \cite{1GP21} built on these results and developed the concept of \emph{deformation motifs}, which allowed determining the number of lifting bitangents directly from the dual subdivision of the tropical quartic and a given sign distribution.
	They implemented this in the extension \texttt{TropicalQuarticCurves} for \texttt{polymake} \cite{2GP21,polymake:2000}.
	
	The underlying techniques used in \cite{LeMa19,CueMa20} need a specific genericity constraint on the tropical quartic curve; see Definition \ref{def:generic}(i). This constraint is of vital importance in determining the lifting conditions for the bitangent class of shape (C), as depicted in Figure \ref{fig:shapeC}. However, it is not always satisfied for a generic point in the interior of a maximal cone in the secondary fan of tropical quartics. For these, it was so far not possible to determine the number of lifting bitangents.
	This paper closes this last remaining gap in our understanding of tropical bitangents of smooth quartic curves.

	\begin{customthm}{A}[Theorems \ref{thm:lift1}, \ref{thm:lift}, Proposition \ref{prop:totallyreal}]\label{thm:A}
		The sign conditions determined in \cite{CueMa20} are valid for all smooth tropical quartics, even if they are non-generic w.r.t. \ref{def:generic}(i). Further, for a generic tropical quartic all real bitangents are totally real, while for smooth tropical quartics non-generic w.r.t. \ref{def:generic}(i) at most one real bitangent is not totally real.
	\end{customthm}
	
	In order to achieve this result, the connection between the number of real bitangents and the topological type of the real curve was used. 
	The question for the possible topological types of smooth real curves of a given degree, was popularized by Hilbert, who included a version of it as the 16th problem in his famous list of mathematical problems. For real smooth quartic curves the answer goes back to Zeuthen, who connected it with the number of real bitangents~\cite{Zeu73}:  $n$ non-nested ovals for $n\in\{1,\ldots,4\}$ with $4,8,16,$ or $28$ real bitangents respectively, or $2$ nested ovals  with $4$ real bitangents. 
	To prove Theorem \ref{thm:A}, we use this classical connection and applied Viro's patchworking technique \cite{Viro} to allow a combinatorial and computational approach. 
	
	Furthermore, we use this approach to collect more data on real tropical quartic curves and their bitangents. This was done by implementing an update to the \texttt{polymake} extension \texttt{TropicalQuarticCurves} \cite{2GP21}, which originally accompanied \cite{1GP21}.
	
	\begin{customthm}{B}[Theorem \ref{thm:main}]
		Table \ref{tab:main} shows the distribution of combinatorial types of real tropical quartic curves (Definition \ref{def:real}) according to their number of real bitangents and real topological type. 
		\begin{table}[h]
			\centering
			\begin{tabular}{c|c|c|c|c|c}
				\# ovals & 1 & 2 nested & 2 & 3 & 4\\ \hline
				\# real bitangents  & 4 & 4 & 8 & 16 &28  \\ \hline \hline
				\multirow{ 2}{*}{\#  comb. real trop. quartics} &  \multicolumn{2}{c|}{$47\,447\,552$} & \multirow{ 2}{*}{$39\,316\,992$} &  \multirow{ 2}{*}{$24\,961\,536$} & \multirow{ 2}{*}{$9\,875\,968$ }
				\\ 
				&  $34\,899\,968$ & $12\,547\,584$&  & & \\ \hline 
				\#  comb. real trop. quartics
				&    \multicolumn{2}{c|}{$ 8\,165\,632$}  & \multirow{ 2}{*}{$6\,771\,456$} &\multirow{ 2}{*}{$4\,294\,912$} &\multirow{ 2}{*}{$1\,706\,752$}\\  
				mod $S_3$  & $6\,003\,712$ &$2\,161\,920$ & & & \\
			\end{tabular}
			\caption{The number of combinatorial types of real tropical quartics $(\mathcal{T},\delta)$ distributed with respect to their number of real bitangents and real ovals. The last row shows the count modulo the $S_3$ action, while the other row shows a total count.}
			\label{tab:main}
		\end{table}

	\end{customthm}
	
	The computation of the data in Table \ref{tab:main} was expensive with respect to computing time: For each of the $1278$ unimodular triangulations of $4\Delta_2$ the number of bitangents and real ovals had to be computed for all $2^{14}$  possible sign distributions. The duration of this computation for just one triangulation is around 73 minutes. For more details, see Section \ref{sec:ext}. Adherent to the FAIR data principles \cite{FAIR}, the data leading to Theorem \ref{thm:main} are stored for future use in the database collection \texttt{Tropical.QuarticCurves} on \texttt{polyDB} \cite{polydb:paper}.

	This paper is organized as follows: Section \ref{sec:pre} 
	gives a brief overview over the results on real tropical curves from the theory of patchworking. Section \ref{sec:results} first revisits some of the results on the lifting of bitangents on smooth tropical quartic curves from previous papers \cite{CueMa20,1GP21}. Then it presents the results regarding the lifting of tropical bitangents for non-generic smooth tropical quartic curves. Section \ref{sec:ext} showcases the additions to the \texttt{polymake} extension \texttt{TropicalQuarticCurves} and to the collection \texttt{Tropical.QuarticCurves} in \texttt{polyDB}.
	\medskip

	\noindent\textbf{Acknowledgements.} The author thanks Hannah Markwig, Michael Joswig, Johannes Rau, and Kris Shaw for insightful discussions, Marta Panizzut and M\'at\'e Telek for comments on this article, and Andreas Paffenholz for help with the database collection.

	\section{Prerequisites}\label{sec:pre}
	A \emph{plane quartic curve} is the vanishing set of a bivariate polynomial of degree four
	\begin{align*}
		f(x,y) =\,\, &a_{00} + a_{10}x+ a_{01}y+ a_{20}x^2 + a_{11}xy+ a_{02}y^2 + a_{30}x^3 + a_{21}x^2y\\&
		+ a_{12}xy^2 + a_{03}y^3 + a_{40}x^4 + a_{31}x^3y+ a_{22}x^2y^2 + a_{13}xy^3 + a_{04}y^4.   
	\end{align*}
	We write $\Gamma = \trop(V(f))$ for its tropicalization. The base field over which we tropicalize is a real closed complete non-archimedean valued field $K_\RR$ and its algebraic closure $K$, e.g. the completion of $\RRt$ and $\CCt$.  Note, that generalizing results, like the count of bitangents, to the reals follows from  Tarski-Seidenberg's Transfer Principle \cite[Theorem
	1.4.2]{Basu:2006}.
	When tropicalizing curves we adhere to the $\max$-convention. In this paper, we only consider \emph{smooth} tropical quartic curves: the Newton polytope of $f$ is the 4-dilated 2-dimensional simplex $4\Delta_2 =\{(x_1,x_2)\in\ZZ_{\geq0}^2 | x_1+x_2\leq 4  \}$, and the dual subdivision of $4\Delta_2$ induced by the coefficients $-\val(a_{ij})$ of the tropical polynomial $\trop(f)$ is a \emph{unimodular triangulation}, i.e., each triangle in the subdivision has volume $\frac{1}{2}$. The dual subdivision of $\Gamma$ will be denoted by $\mathcal{T}$ or $\mathcal{T}(\Gamma)$, if clarification is necessary. The set of points in $\RR^{15}$ that induce the same subdivision $\mathcal{T}$ of $4\Delta_2$ is a relatively open cone called \emph{secondary cone}. 
	The combinatorial aspects of the tropical curve $\Gamma = \trop(V(f))$ are determined by the dual subdivision $\mathcal{T}(\Gamma)$.

	In \cite{Mik,Schroeter_2013,RS23,texier2021topologyrealalgebraiccurves} a real tropical curve is defined as a tuple of a non-singular tropical curve together with a real phase structure. This real phase structure can be defined directly on a non-singular curve, without a distribution of signs on the lattice points of the dual subdivision $\mathcal{T}(\Gamma)$. However, up to multiplication with $(-1)$, the real phase structure defines a sign distribution on the lattice points of $\mathcal{T}$; see \cite[Remark 3.8]{RS23}. Since we will only use the sign distribution but not the real phase structure, we use the following definition for a real tropical curve. For a definition of the real phase structure we refer to \cite[Definition~3.5]{texier2021topologyrealalgebraiccurves}.
	
	\begin{definition}\label{def:real} A \emph{real tropical curve} is a tuple $(\Gamma,\delta)$, where $\Gamma$ is a smooth tropical curve and $\delta\in \{\pm 1\}^{\#(\mathcal{T}\cap\ZZ^2)}$ is a distribution of signs on the lattice points of the dual subdivision~$\mathcal{T}$. 
		We say $(\mathcal{T},\delta)$ is the \emph{combinatorial type} of the real tropical curve. 
	\end{definition}
	The combinatorial type of a real tropical curve determines many properties of the curve, like its number of real ovals or real bitangents. The symmetry group $S_3$ acts on the combinatorial type by permuting the lattice points of $4\Delta_2$ by reflections along the triangle medians. The sign distribution is unique up to a multiplication by $-1$, so there are $2^{14}$ possible patterns. 
	As there are $1278$ unimodular triangulations of $4\Delta_2$ up to the $S_3$ action \cite[Table 1]{BJMS15}, there are $1278\cdot 2^{14}$ combinatorial types of real tropical curves up to $S_3$ symmetry.
	
	Now we explain how to connect real tropical curves and real algebraic curves. We identify $\ZZ_2^2$ with the reflections in $\RR^2$ with respect to the coordinate hyperplanes: For $\epsilon\in\ZZ^2_2$, we write $\epsilon(x,y) = ((-1)^{\epsilon_1}x,(-1)^{\epsilon_2}y ).$ Let $\mathbb{T}^2$ denote the tropical projective plane as in \cite[Section 6.2]{MS15}.
	The \emph{real tropical plane} 
	$\mathbb{T}\RR^2 $  is homeomorphic to $\bigsqcup_{\epsilon\in\ZZ^2_2} \epsilon(\Delta_2)/\sim$, where a face $F$ of the polytope is identified with $\epsilon(F)$ for $\epsilon = (\overline{\alpha_1},\overline{\alpha_2})$ where $(\alpha_1,\alpha_2)$ is the integer vector orthogonal to the primitive integer direction of $F$. This is illustrated in blue in Figure~\ref{fig:patchworking2}. The real part of a real tropical curve is a curve in the real tropical plane. We construct it from $(\Gamma,\delta)$ by the procedure below. Viro's patchworking theorem (see Theorem~\ref{thm:viro}) then verifies that this fits together with the real part of an algebraic lift of the tropical curve.
	
	For a given real tropical curve $(\Gamma,\delta)$ with Newton polytope $d\Delta_2$ we extend the sign distribution $\delta$ to the integer points $v$ of $\bigsqcup_{\epsilon\in\ZZ^2_2}\epsilon(d\Delta_2)$ by setting $\delta(\epsilon(v)) = (-1)^{\epsilon_1v_1+\epsilon_2v_2}\delta(v).$ This is shown in Figures \ref{fig:patchworking1} and \ref{fig:patchworking2}.
	
	For an edge $e$ of $\Gamma$ the \emph{real part} $\RR e_\delta$ of $e$ with respect to $\delta$ is 
	$$\RR e_{\delta}:= \bigcup_{\epsilon\in\ZZ^2_2 \text{ such that} \atop \text{the vertices of }\epsilon(e^\vee) \text{ have opposite signs}} \epsilon (e).$$
	
	\begin{definition}[{\cite[Definition 3.2]{texier2021topologyrealalgebraiccurves}}]
		The \emph{real part} of a smooth real tropical curve $(\Gamma,\delta)$ is  $$\RR\Gamma_{\delta}:= \overline{\bigcup_{e\in\text{Edge}(\Gamma)}\RR e_{\delta}}\subset \mathbb{T}\RR^2.$$ 
	\end{definition}
	Figure \ref{fig:patchworking3} shows a schematic drawing of the real edges corresponding to the given subdivision and sign distribution. A visualization  by \texttt{polymake} of the real part of a real tropical curve with combinatorial type as in Figure \ref{fig:patchworking1} is presented in Figure \ref{fig:patchworking4}.
	
	\begin{figure}
		\centering
		\begin{subfigure}[c]{.2\textwidth}
			\centering
			\includegraphics[width=0.75\linewidth]{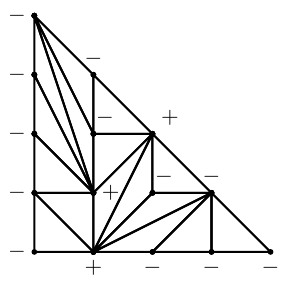}
			\caption{}\label{fig:patchworking1}
		\end{subfigure}  
		\begin{subfigure}[c]{.25\textwidth}
			\centering
			\includegraphics[width=0.95\linewidth]{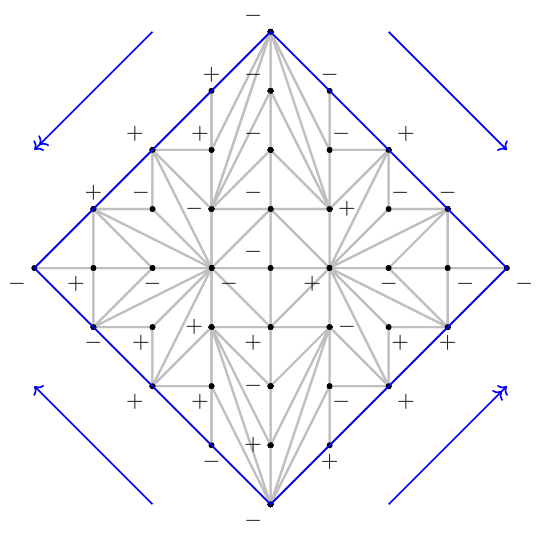}
			\caption{}\label{fig:patchworking2}
		\end{subfigure}
		\begin{subfigure}[c]{.25\textwidth}
			\centering
			\includegraphics[width=0.95\linewidth]{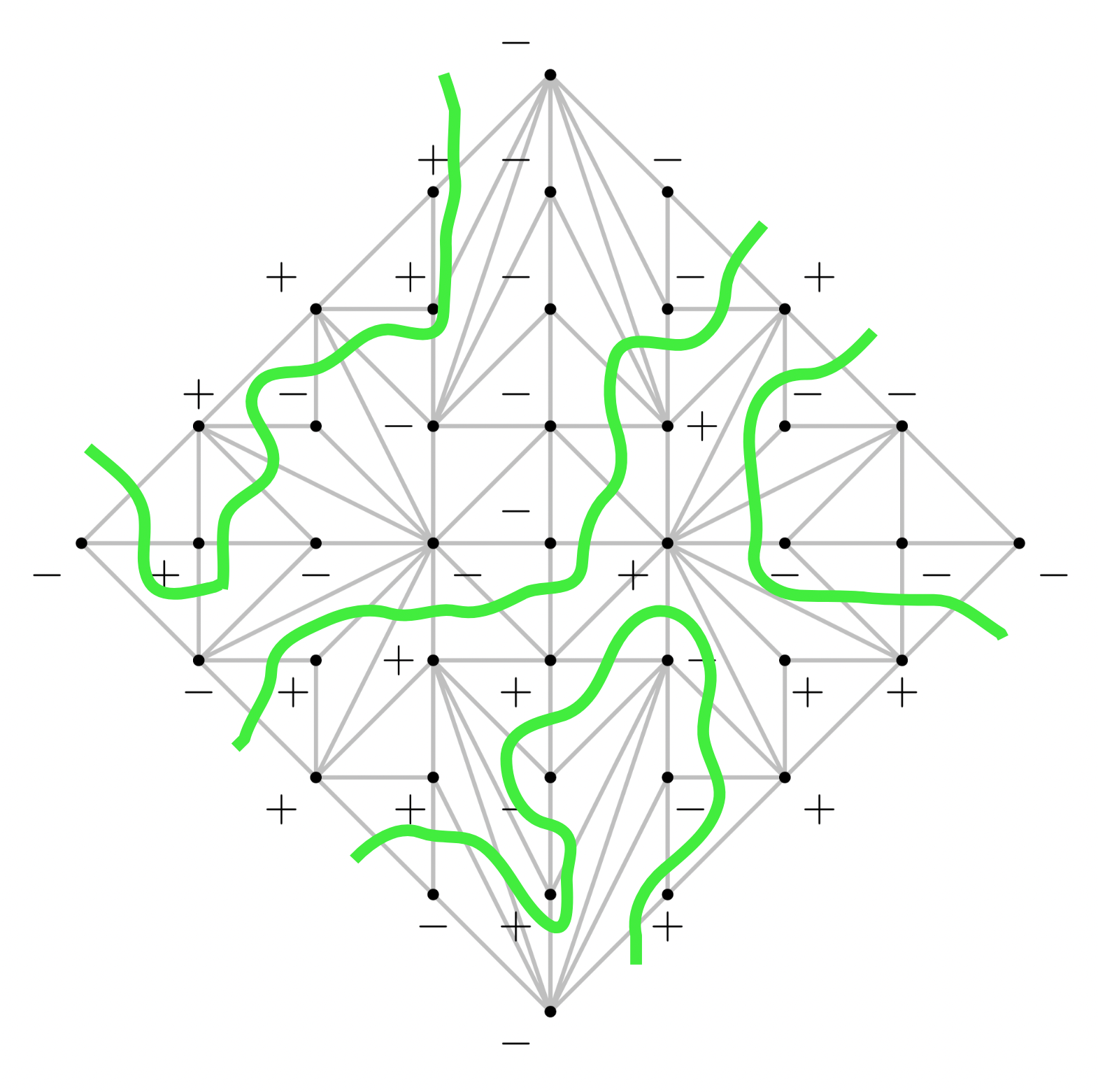}
			\caption{}\label{fig:patchworking3}
		\end{subfigure}  
		\begin{subfigure}[c]{.25\textwidth}
			\centering
			\includegraphics[width=0.95\linewidth]{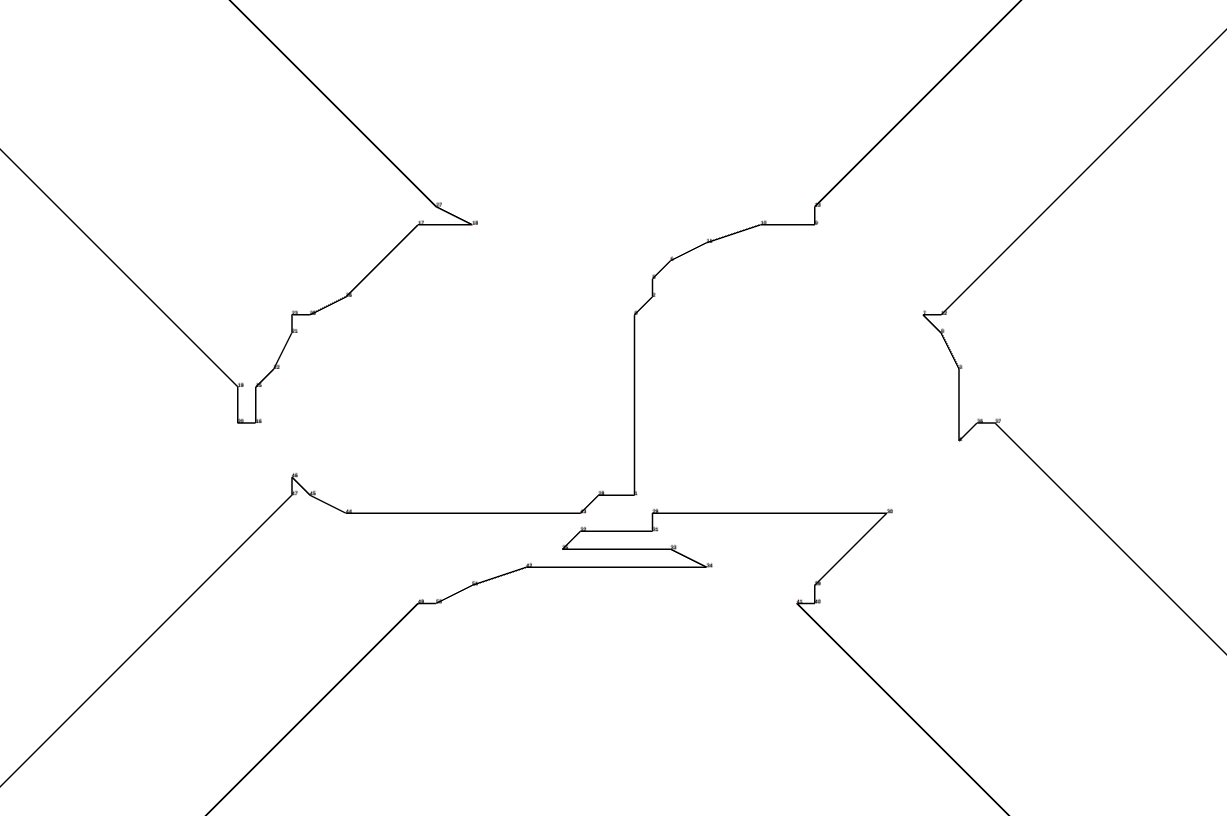}
			\caption{}\label{fig:patchworking4}
		\end{subfigure}
		\caption{Illustration of the patchworking technique. }
		\label{fig:patchworking}
	\end{figure}
	
	One tool to determine properties of real tropical curves is the patchworking technique~\cite{Viro}. We use Viro's patchworking theorem as presented in \cite{texier2021topologyrealalgebraiccurves}.
	\begin{theorem}[{\cite[Theorem 3.3]{texier2021topologyrealalgebraiccurves}, \cite{Viro}}]\label{thm:viro}
		Let $(C_t)_t$ be a family of
		non-singular real algebraic plane curves
		converging to a non-singular tropical curve $\Gamma$ (in the sense of Remark~\ref{rem:converge}), such
		that $\Gamma$ has Newton polygon $d\Delta_2$, where $d$ is the degree of $C_t$. Let $(P_t)_t$ be a family of polynomials
		defining $(C_t)_t$, and let $\delta$ be the distribution of signs of the coefficients. For $t \in\RR_{>0}$ small enough, there exists
		a homeomorphism from $ \PP^2_\RR \to \mathbb{T}\RR^2$  
		which maps the set of real points $C_t(\RR)$ onto $\RR\Gamma_\delta$.
	\end{theorem}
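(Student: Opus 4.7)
The plan is to prove this by constructing a toric degeneration of the family $(C_t)$ and then arguing that its real topology is stable under the deformation. First I would write the defining family as
\[
P_t(x,y) \;=\; \sum_{(i,j)\in d\Delta_2\cap\ZZ^2} c_{ij}\, t^{-\nu(i,j)}\, x^i y^j,
\]
where $\nu$ is the tropical polynomial (in $\max$-convention) whose corner locus is $\Gamma$ and whose regions of linearity are exactly the triangles of the unimodular triangulation $\mathcal{T}$, and where $\mathrm{sgn}(c_{ij}) = \delta(i,j)$. The convergence hypothesis of Remark~\ref{rem:converge} guarantees that, after a reparametrization of $t$, the family $(C_t)$ can be put in this form.

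The next step is to compactify inside $X_\mathcal{T}\times\mathbb{A}^1$, where $X_\mathcal{T}$ is the projective toric variety associated with the coherent unimodular triangulation $\mathcal{T}$. The closure of $\{P_t = 0\}$ is a flat family whose special fiber $C_0$ decomposes, one piece per triangle $\sigma\in\mathcal{T}$, into the truncated curves $V(P_\sigma)$ sitting inside the corresponding toric strata; unimodularity of $\mathcal{T}$ ensures that each such piece is smooth and meets the toric boundary transversally in one point per adjacent cell. On the real locus, the reflection action of $\ZZ_2^2$ identifies $X_\mathcal{T}(\RR)$ with the model of $\mathbb{T}\RR^2$ described in Section~\ref{sec:pre}, and a direct sign computation inside each chart shows that $C_0(\RR)$ is exactly the combinatorial patchwork $\RR\Gamma_\delta$ built from the edge-by-edge rule given above the theorem.

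The main obstacle is to lift the identification $C_0(\RR) \cong \RR\Gamma_\delta$ to a homeomorphism $C_t(\RR) \cong \RR\Gamma_\delta$ for all sufficiently small $t>0$, inside an ambient homeomorphism $\PP^2_\RR \to \mathbb{T}\RR^2$. Away from the nodes of $C_0$, the total space is smooth over a neighborhood of $t=0$, so Ehresmann's fibration theorem supplies a topological trivialization. At each node, which by unimodularity lies on the toric boundary and is an ordinary double point, an explicit local Newton polygon analysis of $P_t$ shows that the two intersecting real branches deform, on the scale $t$, to a single smooth real arc that rounds the corner in the direction prescribed by the signs at the endpoints of the corresponding edge of $\mathcal{T}$. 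The delicate step is to patch these local contour-rounding models together with the bulk trivialization into a single ambient homeomorphism carrying $C_t(\RR)$ to $\RR\Gamma_\delta$; controlling the size of the rounding neighborhoods uniformly in $t$ so that the global gluing remains a homeomorphism onto $\mathbb{T}\RR^2$ is the heart of Viro's original argument, and is the part I expect to require the most care.
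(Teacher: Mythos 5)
The paper does not prove this statement at all: Theorem~\ref{thm:viro} is quoted as a known result of Viro, in the formulation of \cite[Theorem 3.3]{texier2021topologyrealalgebraiccurves}, and is used as a black box throughout Section~\ref{sec:results}. So there is no in-paper argument to compare yours against; the remarks below assess your sketch on its own terms.

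Your outline follows the standard route to patchworking (a toric degeneration over the triangulation, identification of the real locus of the special fiber with the combinatorial patchwork, then a deformation argument back to $C_t$), and the individual ingredients you name are the right ones. But as written it is a proof plan, not a proof, and the gap is exactly the one you flag yourself: the construction of a single \emph{ambient} homeomorphism $\PP^2_\RR\to\mathbb{T}\RR^2$ carrying $C_t(\RR)$ onto $\RR\Gamma_\delta$. Two points there deserve to be made explicit rather than deferred. First, Ehresmann's theorem applied away from the nodes of $C_0$ only gives you a trivialization of the family of curves over a punctured neighborhood of $t=0$, i.e.\ homeomorphisms $C_t(\RR)\cong C_{t'}(\RR)$ for small $t,t'>0$; the statement requires an ambient identification with the fixed target $\mathbb{T}\RR^2=\bigsqcup_{\epsilon}\epsilon(\Delta_2)/\!\sim$, which is the moment-map model of the \emph{general} fiber $\PP^2_\RR$, not of the reducible special fiber $X_{\mathcal T}(\RR)$ where your $C_0$ lives. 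Bridging that (via the moment map, or via a log/phase-tropicalization map sending $C_t(\RR)$ into the quotient of the orthants) is not a formality and is where the uniform-in-$t$ control of the corner-rounding neighborhoods actually enters. Second, the claim that ``a direct sign computation inside each chart shows that $C_0(\RR)$ is exactly the combinatorial patchwork'' is precisely the content of the sign-extension rule $\delta(\epsilon(v))=(-1)^{\epsilon_1v_1+\epsilon_2v_2}\delta(v)$ and the definition of $\RR e_\delta$; it is true, but it is the combinatorial heart of the theorem and should be carried out, not asserted. If your goal is to use the theorem the way this paper does, citing \cite{Viro} and \cite{texier2021topologyrealalgebraiccurves} is the appropriate move; if your goal is to reprove it, the gluing step you postpone is the theorem.
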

	Thus, using  patchworking we can determine the topology of a real lift of a real tropical curve, i.e., the number of ovals and whether or not they are nested.

	\begin{remark}\label{rem:converge}
		A tropical plane curve can be approximated by algebraic curves in $(\CC^\times)^2$ using the logarithmic map $\text{Log}_t(z,w) = (\log_t |z|,\log_t |w|),$ where $t\in\RR_{>0}$. The image of a curve $C$ under this map is called the \emph{amoeba} of $C$ in base $t$.
		We say a collection of algebraic curves $(C_t)_t$ approximates a tropical plane curve $\Gamma$, if \begin{itemize}
			\item for every $t$ the polynomial $P_t$ defining the curve $C_t$ has coefficients that are functions $\RR\to\CC$ in $t$,
			\item the vanishing order of the coefficients for $t\in\RR_{>0}$ tending to 0 agrees with the coefficients of the tropical polynomial defining $\Gamma,$ i.e. for $t\to 0$ we have $\alpha_{i,j}(t) \sim \beta_{i,j}t^{-a_(i,j)}$ for $\alpha_{i,j}(t)$ coefficient of $P_t$, $\beta_{i,j}\in\CC^\times$, $a_{i,j}\in\mathbb{R}\cup\{-\infty\}$ and $\Gamma$ is defined by the tropical polynomial $P(x,y)=\max_{i,j}(a_{i,j} + i x + jy).$
		\end{itemize}
		
	\end{remark}
	
	When a family of amoeba approximates a tropical curve $\Gamma$, there are two ways a bounded edge of $\Gamma$ can be approximated. See Figure \ref{fig:twist}. Edges that are approximated as in Figure~\ref{fig:twisted} are called \emph{twisted} \cite[Section 3]{BIMS15}. Not every edge of a tropical curve can be twisted. The potential edges that can be twisted, depending on the approximation, are called \emph{twist-admissible}.
	
	\begin{figure}
		\centering
		\begin{subfigure}{.4\textwidth}
			\centering\includegraphics[width=0.45\linewidth]{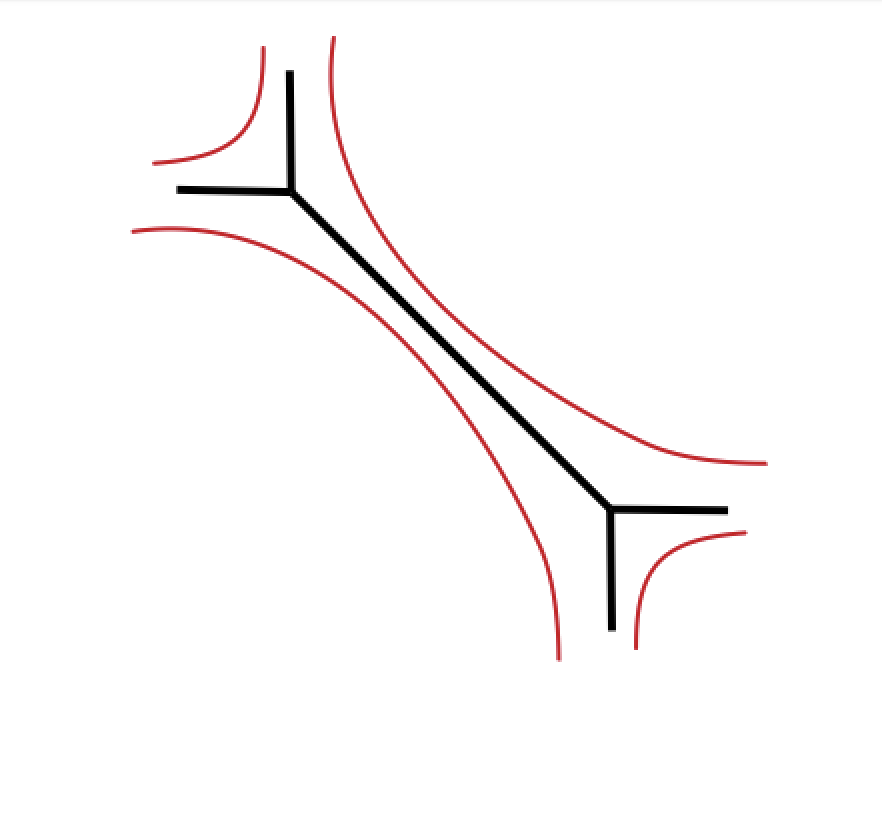} 
			\caption{Not twisted.}
		\end{subfigure}
		\begin{subfigure}{.4\textwidth}
			\centering\includegraphics[width=0.4\linewidth]{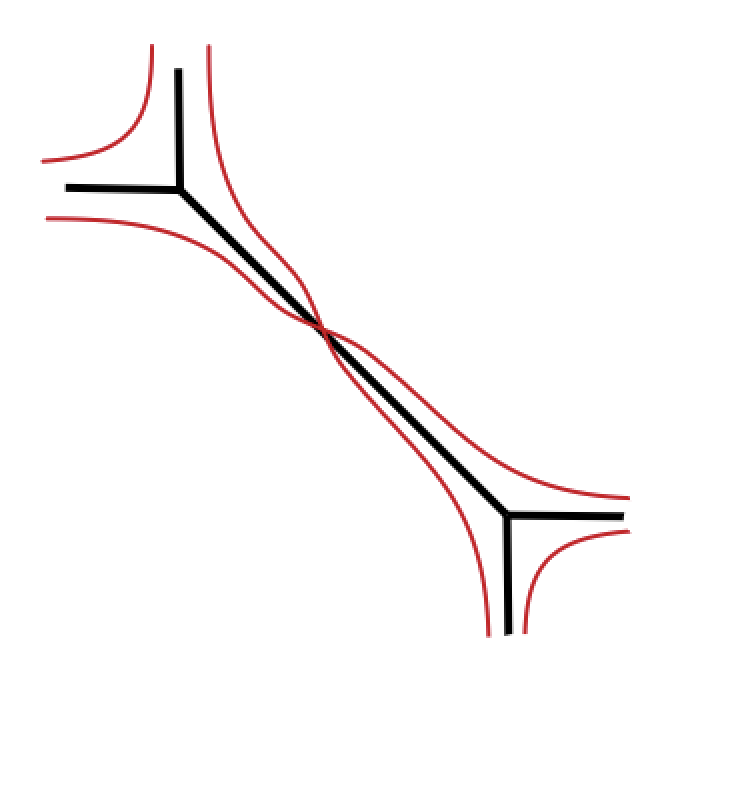}   
			\caption{Twisted.}\label{fig:twisted}
		\end{subfigure}
		\caption{Amoeba for a part of a tropical plane curve showing the difference between a twisted and a not twisted edge. }
		\label{fig:twist}
	\end{figure}

	\begin{definition}[{\cite[Definition 3.3]{BIMS15}}]
		A subset $T$ of the set of bounded edges of a tropical curve $\Gamma$ is called \emph{twist-admissible} if it satisfies the following condition.
		Consider $\Gamma$ as a metric graph and let $\gamma$ be a cycle of this graph. For any such cycle $\gamma$, take the edges in $\gamma\cap T =\{g_1,\ldots,g_k\}$. Let $(u_i,v_i)$ denote the primitive integer vector in the direction of $g_i$. Then
		$\sum_{i=1} ^k(u_i,v_i)=0 \mod 2.$
	\end{definition}
	
	As a consequence of how twisted edges arise from the family of approximating amoebas, the set of twisted edges of a tropical curve depends only on a given sign distribution $\delta$ on the coefficients \cite[Section 3.2]{BIMS15}.
	
	\begin{proposition}[{\cite[Remark 3.9]{BIMS15},\cite[Proposition 3.12]{texier2021topologyrealalgebraiccurves}}]
		Let $E$ be a bounded edge of a real tropical curve $(\Gamma,\delta)$. We denote by $e$ the edge in the dual subdivision $\mathcal{T}(\Gamma)$ that is dual to $E$. Let $v_1$ and $v_2$ be the two lattice points that span $e$. Since $\mathcal{T}(\Gamma)$ is unimodular, the edge $e$ is part of two triangles. We denote the two vertices of these triangles that are disjoint from $e$ as $a_1$ and $a_2$.
		
		\begin{itemize}
			\item If the coordinates modulo 2 of $a_1$ and $a_2$ 
			are distinct, then the edge $e$ is twisted if and only if $\delta(v_1)\delta(v_2)\delta(a_1)\delta(a_2) = +1.$
			\item If the coordinates modulo 2 of $a_1$ and $a_2$ coincide, then the edge $e$ is twisted if and only if $\delta(a_1)\delta(a_2) =-1.$
		\end{itemize}
		
	\end{proposition}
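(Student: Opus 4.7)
The plan is to reduce the statement to an explicit local computation at the bounded edge $E$, following the amoeba-based definition of twisted edges from \cite[Section~3]{BIMS15}. First, I would fix local coordinates so that the four relevant lattice points of $\mathcal{T}(\Gamma)$ span a standard configuration. Using an integer affine change of variables (which preserves $\ZZ^2$-parity classes and intertwines with the $\ZZ_2^2$-action on $(\CC^\times)^2$) together with the unimodularity of $\mathcal{T}(\Gamma)$, I can assume $v_1=(0,0)$, $v_2=(1,0)$, $a_1=(p_1,1)$, and $a_2=(p_2,-1)$ for integers $p_1,p_2$. The parities of $a_1$ and $a_2$ coincide precisely when $p_1\equiv p_2\pmod 2$, and both sign products appearing in the statement are invariant under this reduction.

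Next I would write down the local patchworking polynomial supported on these four lattice points,
\begin{equation*}
P_t(x,y)=\delta(v_1)\,t^{-c_{00}}+\delta(v_2)\,t^{-c_{10}}x+\delta(a_1)\,t^{-c_{a_1}}x^{p_1}y+\delta(a_2)\,t^{-c_{a_2}}x^{p_2}y^{-1},
\end{equation*}
with the exponents $c_\bullet$ matching the values of $\trop(f)$ at the four points. Above the interior of $E$, the relevant real solutions satisfy $y\approx\pm t^{\alpha}$ with $\alpha$ determined by balancing the two $y$-monomials against the $y$-independent part. Solving the resulting reduced real binomial and comparing the sign of $y$ at $x>0$ and $x<0$ determines whether the two real arcs emerging from the endpoints of $E$ land on the same $\ZZ_2^2$-sheet of $\mathbb{T}\RR^2$ or on reflected sheets, i.e.\ whether $E$ is untwisted or twisted in the sense of Figure~\ref{fig:twist}.

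The case split in the proposition then arises naturally. When $p_1\equiv p_2\pmod 2$, the factor $x^{p_1-p_2}$ is a square in $\RR^\times$ and drops out of the sign comparison, leaving only the relative sign $\delta(a_1)\delta(a_2)$; this explains the second bullet. When the parities differ, $x^{p_1-p_2}$ contributes a factor $\mathrm{sign}(x)$, and the balancing against the $y$-free part $\delta(v_1)+\delta(v_2)x$ forces all four signs to enter, yielding the product $\delta(v_1)\delta(v_2)\delta(a_1)\delta(a_2)$ in the first bullet.

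The hard part will be the bookkeeping that translates \emph{real arcs of the approximating amoeba} into \emph{sheets of $\mathbb{T}\RR^2$} in a manner consistent with the identifications used in Viro's theorem (Theorem~\ref{thm:viro}). In particular, one must verify that the $\ZZ_2^2$-element detected by the local sign computation above corresponds exactly to the reflection $\epsilon$ associated with the primitive integer direction of $E$ under the identification $\epsilon(F)\sim F$ defining $\mathbb{T}\RR^2$. Once this compatibility is fixed, the two bullets of the proposition follow from the elementary sign calculations sketched above.
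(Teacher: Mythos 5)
Note first that the paper itself contains no proof of this proposition: it is quoted from \cite[Remark 3.9]{BIMS15} and \cite[Proposition 3.12]{texier2021topologyrealalgebraiccurves}, so your proposal is being measured against a citation rather than an argument. Your overall strategy is the standard one and is sound: the unimodular normalization to $v_1=(0,0)$, $v_2=(1,0)$, $a_1=(p_1,1)$, $a_2=(p_2,-1)$ is correct (unimodularity forces the apexes to lattice height $\pm1$, on opposite sides because $e$ is an interior edge), the parity dichotomy and both sign products are indeed invariant under such a change of coordinates, and the mechanism you name is the right one: along the arcs over $E$ the $y$-free binomial pins $\mathrm{sign}(x)=-\delta(v_1)\delta(v_2)$, and this sign enters the twist criterion only through $\mathrm{sign}(x)^{p_1-p_2}$, which is trivial exactly when $a_1\equiv a_2 \bmod 2$.

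The gap is in the middle step, which as written does not match the geometry. Over the interior of $E$ it is $x$, not $y$, whose order and sign are pinned (the dominant balance there is $\delta(v_1)t^{-c_{00}}+\delta(v_2)t^{-c_{10}}x\approx 0$, both apex monomials being subdominant), while $y$ is the free parameter; the two real arcs over $E$ lie in the two quadrants with $\mathrm{sign}(x)=-\delta(v_1)\delta(v_2)$ and $y\gtrless 0$. In particular there is no ``sign of $y$ at $x>0$ versus $x<0$'' to compare, since $\mathrm{sign}(x)$ is constant on these arcs. What detects the twist is a comparison between the two \emph{endpoints} of $E$: in each of the two quadrants $\epsilon$, the arc over $E$ turns at the top vertex into the edge dual to the side $[v_i,a_1]$ whose extended signs disagree, and similarly at the bottom vertex with $a_2$; the edge is untwisted iff the $v_1$-side continuation occurs in the same quadrant at both ends. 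Writing the extended sign of $a_j$ in quadrant $\epsilon$ as $(-1)^{\langle\epsilon,a_j\rangle}\delta(a_j)$ and eliminating $\epsilon_2$ yields ``untwisted iff $\delta(a_1)\delta(a_2)(-1)^{(p_1-p_2)\epsilon_1}=1$'' with $(-1)^{\epsilon_1}=-\delta(v_1)\delta(v_2)$, which is exactly the claimed dichotomy. Beyond repairing this step, you still owe the deferred bookkeeping: (i) that this combinatorial turning data in $\RR\Gamma_\delta$ agrees with the amoeba-theoretic definition of a twist for an approximating family in the sense of Remark~\ref{rem:converge} (this is where Viro's theorem is actually invoked), and (ii) that the notion of twist itself, not just the sign products, is invariant under the monomial coordinate change and under multiplying $P_t$ by a monomial (which rescales all extended signs in a fixed quadrant by a common factor). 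Neither point is difficult, but as it stands the proposal asserts rather than performs the decisive comparison.
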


	Viro's patchworking theorem can be rephrased using twisted edges as follows.
	\begin{theorem}[{\cite[Theorem 3.4]{BIMS15}},\cite{Viro}]
		For any twist-admissible set $T$ in a non-singular tropical curve $C$ in
		$\RR^2$, there exists a family of non-singular real algebraic curves $(C_t)_{t\in \RR_{>0}}$ in $(\CC^\times)^2$ which
		converges to $C$ in the sense of Remark \ref{rem:converge}  and such that the corresponding set of twisted
		edges is $T$. 
	\end{theorem}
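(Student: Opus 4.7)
The plan is to deduce the theorem from Viro's patchworking theorem in its sign-distribution form (Theorem \ref{thm:viro}). That earlier result produces, for every sign distribution $\delta$ on the lattice points of $d\Delta_2$, a family $(C_t)_t$ converging to $\Gamma$. The proposition immediately preceding this theorem assigns, to every such $\delta$, a well-defined set of twisted edges in $\Gamma$. Hence it suffices to prove the purely combinatorial statement: a subset $T$ of bounded edges is twist-admissible if and only if there exists a sign distribution $\delta$ whose associated twisted-edge set equals $T$.

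First I would reformulate things additively over $\mathbb{F}_2$: identify $\{\pm 1\}$ with $\mathbb{F}_2$ and let $\Phi \colon \mathbb{F}_2^{\mathcal{T}\cap \mathbb{Z}^2} \to \mathbb{F}_2^{B}$ be the map sending $\delta$ to its twisted-edge indicator, where $B$ denotes the set of bounded edges. Using the two cases of the preceding proposition, $\Phi$ is an affine map whose constant term only records the parities of the vertices $a_1,a_2$ opposite to each dual edge. The image of $\Phi$ is therefore an affine subspace of $\mathbb{F}_2^{B}$, and I claim it coincides with the affine subspace cut out by the cycle relations $\sum_{i}(u_i,v_i)\equiv 0 \pmod 2$.

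For the inclusion $\mathrm{Im}(\Phi) \subseteq \{\text{twist-admissible}\}$, I would argue directly: fix a cycle $\gamma$ of $\Gamma$ and unfold the contribution of each $\delta(v)$ around $\gamma$. At every vertex of the dual subdivision visited by the dual of $\gamma$, the sign $\delta$ appears in an even number of the relevant four-vertex products, so the total product collapses to a quantity that depends only on the parities of the vertices and the primitive directions $(u_i,v_i)$; the cycle relation in the definition of twist-admissibility is then exactly the condition that this parity vanishes. For the reverse inclusion I would use a rank/dimension count: both $\mathrm{Im}(\Phi)$ and the twist-admissible subspace are cut out by independent cycle relations indexed by a basis of $H_1(\Gamma,\mathbb{F}_2)$, and a straightforward Euler-characteristic comparison (using that $\mathcal{T}$ is unimodular so each bounded edge is shared by exactly two triangles) shows they have the same codimension. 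Alternatively one can build $\delta$ explicitly by fixing $\delta$ on a spanning tree of the dual graph and extending it across twist-admissibility-compatible cycles using the cocycle condition.

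The main obstacle is the second inclusion: unwinding the two case distinctions of the proposition into a single clean surjectivity statement requires some bookkeeping, and one needs to handle the unbounded edges and the global $\pm 1$ ambiguity on $\delta$ carefully. Once surjectivity is established, the theorem follows: given twist-admissible $T$, pick any $\delta$ with $\Phi(\delta)=T$, apply Theorem \ref{thm:viro} to obtain the family $(C_t)_t$, and observe that the homeomorphism provided by Viro's theorem carries the local picture of Figure \ref{fig:twist} for the edges of $T$ to the twisted approximation behavior, as required.
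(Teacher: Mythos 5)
The paper offers no proof of this statement: it is imported verbatim from \cite{BIMS15} (Theorem 3.4 there) and attributed to Viro, so there is no internal argument to compare yours against. Your overall strategy is nonetheless the standard one underlying the cited result: realize a sign distribution $\delta$ by the obvious family $P_t=\sum \delta_{ij}t^{-a_{ij}}x^iy^j$ (note that Theorem \ref{thm:viro} as stated takes a family as \emph{input}, so this small existence step does need to be said), and reduce the theorem to the purely combinatorial claim that the affine map $\Phi$ from sign distributions to twisted-edge sets surjects onto the twist-admissible subsets.

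That surjectivity claim, however, is essentially the entire mathematical content of the theorem, and you leave it as a sketch with a genuine gap. Your ``straightforward Euler-characteristic comparison'' conceals several facts that each require proof: (i) the forward inclusion $\mathrm{Im}(\Phi)\subseteq W$, where $W$ is the linear subspace of twist-admissible indicator vectors; (ii) that the kernel of the linear part of $\Phi$ is \emph{exactly} the $3$-dimensional space of mod-$2$ affine-linear functions $v\mapsto c_1v_1+c_2v_2+d$ (the containment ``$\supseteq$'' is easy, because for an edge whose opposite vertices have distinct parities the four relevant lattice points realize all four residues in $(\ZZ/2)^2$, but the reverse containment is not); (iii) that the $2g$ cycle conditions cutting out $W$ are linearly independent, so that $\dim W = \#\{\text{bounded edges}\}-2g$; and (iv) the identity $\#(\Delta\cap\ZZ^2)-3=\#\{\text{bounded edges}\}-2g$, which does hold for unimodular triangulations but is what makes the ranks match. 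You also need a base point: since $W$ is linear while $\mathrm{Im}(\Phi)$ is affine, you must exhibit one $\delta$ with empty twisted set (e.g.\ the Harnack distribution $\delta(v)=(-1)^{v_1v_2}$) before the dimension count can conclude $\mathrm{Im}(\Phi)=W$. Your alternative suggestion --- propagating $\delta$ from a spanning tree of the dual graph and closing up along cycles using twist-admissibility --- is the more robust route and is closer to how the realizability is actually established, but as written it is only an intention, not an argument. Since you yourself flag this step as ``the main obstacle,'' the proposal should be regarded as an outline of the right approach rather than a proof.
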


	In algebraic geometry, a complex curve $C$ is called \emph{dividing} if the real locus $C(\RR)$ of the curve divides the complex locus into two connected components. Otherwise, $C(\CC)\setminus C(\RR)$ is connected and the curve $C$ is not dividing.

	\begin{definition}[{\cite[Definition 4.6.]{texier2021topologyrealalgebraiccurves}}]
		A smooth real tropical curve $(\Gamma,\delta)$ is \emph{dividing}
		if $(\Gamma,\delta)$ is the tropical limit of a family of non-singular real algebraic curves $(C_t)_t$
		such that for $t\in R>0$ small enough, the curve $C_t$ is dividing.
	\end{definition}
	We will use the following theorem from \cite{Haa97} as presented in \cite{texier2021topologyrealalgebraiccurves} to determine whether a real tropical curve is dividing.
	\begin{theorem}[{\cite[Theorem 4.7]{texier2021topologyrealalgebraiccurves}},\cite{Haa97}]
		Let $(\Gamma,E)$ be a non-singular real tropical curve with admissible set of twisted edges $T$. Then $(\Gamma,E)$ is dividing if and only if for any cycle
		$\gamma$ in $\Gamma$ (seen as a subset of bounded edges of $\Gamma$), we have
		$$ |\gamma\cap T|= 0 \mod 2.$$
	\end{theorem}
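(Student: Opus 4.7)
The plan is to translate the dividing property into a homological condition on the smooth complex fiber $C_t(\CC)$, and then evaluate that condition combinatorially using the twist data $T$ produced by patchworking.

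First, I would recall the classical criterion (due to Rokhlin / Klein, cf.~\cite{texier2021topologyrealalgebraiccurves}): a non-singular real algebraic curve $C_t$ is dividing if and only if the fundamental class $[C_t(\RR)]$ vanishes in $H_1(C_t(\CC);\ZZ/2)$. This is essentially the observation that $C_t(\CC)\setminus C_t(\RR)$ has two components exactly when $C_t(\RR)$ is a boundary in the oriented surface $C_t(\CC)$. By Poincar\'e duality on $C_t(\CC)$, the class $[C_t(\RR)]$ vanishes in $H_1(\,\cdot\,;\ZZ/2)$ iff its mod-$2$ intersection with every class of $H_1(C_t(\CC);\ZZ/2)$ is zero. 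Thus the theorem reduces to identifying a convenient spanning set of cycles in $H_1(C_t(\CC);\ZZ/2)$ and computing the pairing against $[C_t(\RR)]$.

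Second, I would use the standard tropical-to-algebraic skeleton correspondence: for a family $(C_t)_t$ converging to $\Gamma$ as in Remark~\ref{rem:converge}, the graph underlying $\Gamma$ is the skeleton of the Berkovich analytification of $C_t$, so its first Betti number equals the genus of $C_t$, and cycles of $\Gamma$ lift (via the section of the retraction) to a basis of $H_1(C_t(\CC);\ZZ/2)$. Concretely, every cycle $\gamma$ in $\Gamma$ (viewed as a subset of bounded edges) is represented by a loop $\tilde\gamma\subset C_t(\CC)$ obtained by concatenating, for each bounded edge $E\in\gamma$, a core circle of the cylindrical piece of $C_t(\CC)$ lying over $E$.

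Third, and this is the technical heart, I would compute $[\tilde\gamma]\cdot[C_t(\RR)]$ locally. Away from the bounded edges contained in $\gamma$, $\tilde\gamma$ can be kept disjoint from $C_t(\RR)$, so every intersection occurs inside a tube over some $E\in\gamma$. Over such a tube, $C_t(\CC)$ is a cylinder and $C_t(\RR)$ is the fixed locus of complex conjugation on it; by the amoeba picture of Figure~\ref{fig:twist}, $C_t(\RR)$ restricted to the tube consists of two arcs that either form two disjoint meridian-like curves (untwisted case) or a single curve that winds around the cylinder (twisted case). A direct mod-$2$ count shows that the core circle $\tilde\gamma$ meets this real locus zero times in the untwisted case and once in the twisted case. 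Summing over edges of $\gamma$ yields
\[
[\tilde\gamma]\cdot[C_t(\RR)]\ \equiv\ |\gamma\cap T|\pmod 2.
\]
Since the classes $[\tilde\gamma]$ span $H_1(C_t(\CC);\ZZ/2)$, the class $[C_t(\RR)]$ vanishes iff $|\gamma\cap T|\equiv 0\pmod 2$ for every cycle $\gamma$, which together with the first step proves the theorem.

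The main obstacle is the local intersection count of step three: one has to be precise about the model of $C_t(\CC)$ over a bounded edge $E$, how the dual edge $e\in\mathcal{T}$ together with the two adjacent vertices $a_1,a_2$ determines whether the patchworked real arcs close up to one or two components, and how the mod-$2$ linking of the core circle with these arcs is read off. The rest of the argument is formal once this dictionary between the combinatorial twist condition and the topological intersection number is established, and the definition of \emph{twisted} in Figure~\ref{fig:twist} is engineered precisely to make the local count equal to $1$ exactly in the twisted case.
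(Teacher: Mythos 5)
The paper does not prove this statement; it imports it verbatim from \cite{Haa97} and from Theorem 4.7 of the cited survey, so your proposal has to stand on its own. As written it has two genuine gaps, one of which breaks the harder implication. The decisive problem is your final step: the lifted cycles $\tilde\gamma$ do \emph{not} span $H_1(C_t(\CC);\ZZ/2)$. For a smooth tropical plane curve the first Betti number of the graph $\Gamma$ equals the genus $g$ of $C_t$, whereas $H_1(C_t(\CC);\ZZ/2)$ has dimension $2g$; the classes of the lifted cycles (the vanishing cycles of the degeneration) span only a $g$-dimensional isotropic subspace $V$. Poincar\'e duality then gives you only that $[C_t(\RR)]\cdot V=0$ implies $[C_t(\RR)]\in V^{\perp}=V$, not $[C_t(\RR)]=0$. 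So even granting everything else, the direction ``all cycles meet $T$ evenly $\Rightarrow$ dividing'' does not follow from your argument. (There is also the minor caveat that ``dividing $\iff [C_t(\RR)]=0$'' requires $C_t(\RR)\neq\emptyset$.)

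The local count in your third step is also not right as stated. Over a bounded edge the model is a long cylinder such as $\{xy=c\}$; its meridian (the actual ``core circle'' of the cylindrical piece) meets the two real arcs in exactly two points whether or not the edge is twisted, so that pairing is identically $0 \bmod 2$. If instead you mean a longitudinal lift of $\gamma$ running through each cylinder, then inside a single cylinder the path can always be isotoped off the two real arcs; twisting is not a local intersection number but a \emph{gluing} datum recording how the real arcs match up with those of the adjacent pairs of pants. This points to the correct architecture of the proof, which is an obstruction/monodromy argument rather than a homological pairing in $C_t(\CC)$: one tries to 2-color $C_t(\CC)\setminus C_t(\RR)$ (equivalently, to trivialize the double cover $C_t(\CC)\setminus C_t(\RR)\to C_t(\CC)/\mathrm{conj}$) piece by piece over the vertices and edges of $\Gamma$; each twisted edge flips the coloring, so the obstruction is a class in $H^1(\Gamma;\ZZ/2)$ whose value on a cycle $\gamma$ is $|\gamma\cap T| \bmod 2$. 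Because the obstruction lives on the graph $\Gamma$ and not on the $2g$-dimensional $H_1(C_t(\CC);\ZZ/2)$, checking it on the cycles of $\Gamma$ genuinely suffices --- which is exactly what your homological framing fails to deliver. I would rebuild the argument along these lines.
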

	
	The goal of Section \ref{sec:results} is to determine the number of real bitangents as well as the topological type of the real part of the tropical curve, i.e. the number of ovals and whether or not they are nested.
	We use the following fact from the algebraic geometry of curves.
	\begin{proposition}\label{prop:dividing}
		Let $C$ be a quartic curve for which the real curve consists of $2$ ovals. The following are equivalent
		\begin{itemize}
			\item $C$ has $4$ real bitangents.
			\item $C$ is dividing.
			\item The $2$ real ovals are nested.
		\end{itemize}
	\end{proposition}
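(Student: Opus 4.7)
The plan is to deduce all three equivalences from classical facts about real smooth plane quartics. The starting point is Zeuthen's theorem \cite{Zeu73}, already recalled in the introduction, which gives the complete list of topological types for smooth real quartics together with their numbers of real bitangents. Among the five possible types, there are exactly two with two ovals: $2$ nested ovals (with $4$ real bitangents) and $2$ non-nested ovals (with $8$ real bitangents). This immediately yields the equivalence of the first and third bullet points, since under the hypothesis that $C$ has $2$ ovals the number of real bitangents is either $4$ or $8$, and $4$ occurs precisely in the nested case.

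To incorporate dividingness, I would use Klein's parity theorem: a smooth real algebraic curve $C$ of genus $g$ with $s$ real connected components that is dividing must satisfy $s \equiv g+1 \pmod 2$. A smooth plane quartic has $g = 3$, so only curves with an even number of ovals can be dividing. This rules out $1$ or $3$ ovals but permits both configurations with $2$ ovals \emph{a priori}. The remaining task is therefore to show that among the two-oval quartics it is exactly the nested configuration that is dividing.

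For this step, the cleanest approach is a direct topological argument. If the two ovals are nested, then $C(\RR)\subset \RR P^2$ bounds a closed annular region $A$, and one can exhibit an explicit pair of complex-conjugate half-surfaces in $C(\CC)$ whose common boundary is $C(\RR)$; concretely, the complex orientations of $C(\CC)$ restrict to the two orientations of $A$ along its boundary, showing that $C(\RR)$ separates $C(\CC)$ into two pieces. In the non-nested case, the two ovals are individually contractible in $\RR P^2$, and one can build a path in $C(\CC)\setminus C(\RR)$ joining a point above the interior of one oval to a point above the interior of the other, showing that $C(\CC)\setminus C(\RR)$ is connected. Combining this with the previous paragraph gives the equivalence of the second bullet with the other two.

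The hard part will be justifying the dividing/non-dividing dichotomy for the two-oval cases without simply quoting the classical literature; the parity condition from Klein is not sharp enough on its own. A conceptually satisfying alternative would be to apply Rokhlin's complex orientation formula, which for quartics yields a numerical constraint on the signs of nested versus non-nested ovals and identifies the nested configuration as the hyperbolic (dividing) type. Either route reaches the same conclusion, and since the statement is a folklore fact about real plane quartics, in the written proof I would simply cite the relevant classical references (e.g.\ \cite{Zeu73} together with Klein/Rokhlin) rather than redo the topological analysis.
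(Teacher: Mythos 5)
Your proposal is correct and in essence matches the paper, whose entire proof is a citation of classical results on real curves (\cite[Corollary 5.3]{realcurves} and the arguments in the proof of \cite[Lemma 7.2]{realcurves}); your plan of combining Zeuthen's classification with Klein/Rokhlin and then deferring to the classical literature is the same move with the intermediate steps spelled out. One caution should you ever expand the sketch: arguing that the nested curve is dividing by restricting ``the complex orientations of $C(\CC)$'' is circular, since complex orientations exist only on dividing curves --- the clean classical argument is that the nested (hyperbolic) quartic admits a separating real morphism to $\PP^1$ (projection from a point inside the inner oval), while Rokhlin's complex orientation formula rules out the non-nested two-oval case being dividing.
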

	\begin{proof}
		This follows from \cite[Corollary 5.3]{realcurves} and the arguments in the proof of \cite[Lemma~7.2]{realcurves}.
	\end{proof}

	\section{Counting Bitangents of smooth tropical quartics}\label{sec:results}
	
	A smooth tropical quartic curve can have infinitely many tropical bitangents. However, these can be grouped together in seven bitangent classes.
	In \cite{CueMa20} the shapes of these tropical bitangent classes were classified and, for generic curves, the real lifting conditions were determined. Later, the authors of \cite{1GP21} showed that the dual subdivision of a smooth tropical quartic curve determines the real lifting conditions of its bitangent classes, for which the different bitangent shapes were grouped into deformation classes with unique motifs in the dual subdivision.
	We briefly recall a few of the results of these articles.
	
	\begin{definition}[{\cite[Definition 3.1]{CueMa20}}]
		Let $\Gamma$ be a smooth tropical quartic curve and $\Lambda$ a tropical line that is bitangent to $\Gamma.$
		The \emph{tropical bitangent class} of $\Lambda$
		consists of the connected components of the subset of $\RR^2$ containing the vertices of all tropical bitangent lines linearly equivalent to $\Lambda$. The \emph{shape} of a tropical bitangent class refines each class by coloring those points belonging to the
		tropical quartic $\Gamma$, and subdividing edges and rays of a class accordingly. An example of a tropical curve with its bitangent classes is depicted in Figure~\ref{fig:shapeC}.
	\end{definition}

	\begin{figure}
		\centering
		\includegraphics[width=0.33\linewidth]{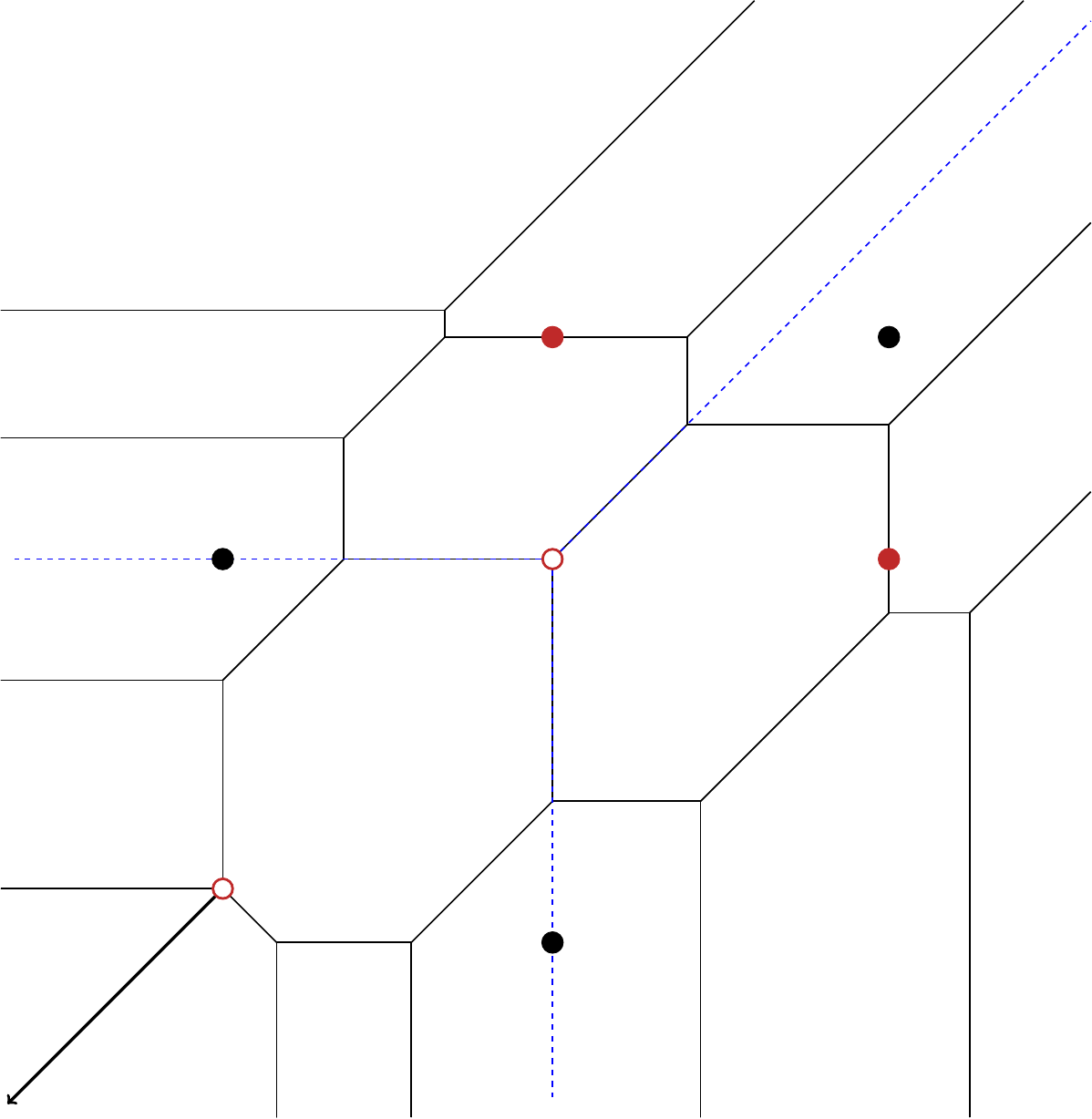}
		\caption{A smooth tropical quartic curve with its 7 bitangent classes. The bitangent class of shape (C) is depicted as a red circle filled with white. The corresponding tropical bitangent line is depicted in blue. }
		\label{fig:shapeC}
	\end{figure}
	By \cite{LeMa19} each tropical quartic curve has 7 bitangent classes.
	The real lifting conditions of the 7 bitangent classes depend only on the dual subdivision \cite{1GP21}.  They are sign conditions, i.e. given the dual subdivision and a distribution of signs on the lattice points, the number of real bitangents is determined. The sign conditions for all shapes of bitangent classes can be found in \cite[Table 11]{CueMa20}. In this article, we are mainly concerned with bitangent classes of shape (C) (see Figure \ref{fig:shapeC}) and their lifting behaviour. 
	\begin{proposition}[{\cite[Table 11, Corollary 7.3]{CueMa20}}]
		Let $\Gamma$ be a smooth, generic tropical quartic curve and $\delta$ a sign distribution on the lattice points of its dual subdivision $\mathcal{T}(\Gamma)$. A bitangent class of $\Gamma$ of shape (C) lifts over $\RR$ to $4$ totally real bitangents if and only if
		\begin{equation}
			(-\delta_{11})^{i+j}(\delta_{12})^i(\delta_{21})^j\delta_{0i}\delta_{j0} >0 \text{ and } (-\delta_{21})^{k+j}(\delta_{12})^k(\delta_{11})^j\delta_{k,4-k}\delta_{j0} >0. \label{eq:C}
		\end{equation}
	\end{proposition}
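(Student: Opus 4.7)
The plan is to follow the strategy of Cueto and Markwig \cite{CueMa20}, exploiting the explicit combinatorial description of shape (C) bitangent classes. First, I would recall that a bitangent class of shape (C) sits around an annular region in $\Gamma$ containing a single hexagonal cycle; the indices $i, j, k$ in \eqref{eq:C} label the lattice points of $\mathcal{T}(\Gamma)$ that neighbour this dual cell. The class contains exactly four tropical line representatives, one vertex per sector of the annulus, and the task reduces to deciding, for each representative separately, whether it lifts to a classical bitangent defined over $\RR$.

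For each representative $\Lambda$, I would write down the bitangency system for a putative lift $L$ of $\Lambda$, and pass to the initial forms dictated by the valuation data of $\Gamma$. Under the genericity assumption (Definition~\ref{def:generic}(i)), these initial systems collapse to two univariate polynomial equations in the residue field, each of the form $\alpha u^n + \beta = 0$, where $\alpha, \beta$ are monomials in a small subset of the $a_{ij}$ — specifically those lattice points appearing in \eqref{eq:C}. The existence of a real root of such a binomial is governed by the sign of $\alpha\beta$ together with the parity of $n$, and the two combined products in \eqref{eq:C} encode exactly these signs.

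Next, I would argue that the four representatives pair up into two classes, the members of each pair sharing the same initial equation up to the $(\ZZ/2)^2$-action of the coordinate reflections on $(K^\times)^2$. Consequently, a single real bitangent in a pair forces its three reflected companions to be real as well, so all four representatives lift to totally real bitangents precisely when both of the sign products in \eqref{eq:C} are positive. Conversely, if either sign product is non-positive, the corresponding binomial has no real solution and the associated representative fails to lift over~$\RR$.

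The main obstacle will be the sign bookkeeping. One must propagate the signs of the $a_{ij}$ through a sequence of resultant and substitution steps, and verify that the parity contributions from the exponents $(-\delta_{11})^{i+j}$, $(\delta_{12})^i$, $(\delta_{21})^j$ match the parity of the exponents that arise when eliminating variables from the bitangency system. I would handle this by fixing a canonical combinatorial position of a shape (C) class — for example, the honeycomb case $i=j=k=1$ depicted in Figure~\ref{fig:shapeC} — verifying \eqref{eq:C} by direct computation there, and then transporting the formula to an arbitrary position of the cell via the lattice translations and $S_3$-symmetries of $4\Delta_2$.
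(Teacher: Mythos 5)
This proposition carries no proof in the paper at all: it is quoted verbatim from \cite{CueMa20} (Table 11 and Corollary 7.3), so there is no internal argument to compare against, and what you have written is an attempted reconstruction of Cueto--Markwig's proof. Your high-level mechanism is the right one --- pass to the initial forms of the tangency equations and read off reality from the signs of the resulting radicands --- but your description of shape (C) is wrong in a way that derails the count. A bitangent class of shape (C) is a \emph{single point} of $\RR^2$ (the vertex of $\Gamma$ adjacent to three bounded edges in directions $-e_1$, $-e_2$, $e_1+e_2$, which coincides with the vertex of the tropical bitangent line; see Figure~\ref{fig:shapeC}), not four representatives distributed over sectors of an annulus. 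The four real bitangents are four distinct algebraic lifts of this one tropical line, i.e.\ the class lifts with multiplicity $4$. Consequently your pairing argument via the $(\ZZ/2)^2$-action of coordinate reflections has no footing: the correct source of the factor $4$ is that the tangency system, after the initial degeneration made possible by genericity condition \ref{def:generic}(i), resolves into two \emph{independent} square-root extractions, and each of the two inequalities in \eqref{eq:C} is precisely the positivity of one radicand, contributing a factor of $2$ to the number of real solutions. If either radicand is negative, the corresponding square root is imaginary and no real lift exists, which gives the "only if" direction. Total reality of the tangency points is a further computation in \cite{CueMa20} that your sketch asserts but does not address.

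The reduction step at the end is also a genuine gap. The triple $(i,j,k)$ indexes which boundary lattice points $(0,i)$, $(j,0)$, $(k,4-k)$ enter the conditions, and this is dictated by the combinatorics of the three bounded edges at the central vertex (equivalently, by which cells of $\mathcal{T}(\Gamma)$ the dual motif is glued to). Different values of $(i,j,k)$ give genuinely different local configurations that are \emph{not} in a single orbit under lattice translations and the $S_3$-action on $4\Delta_2$; in particular the exponents $(-\delta_{11})^{i+j}$, $(\delta_{12})^{i}$, $(\delta_{21})^{j}$ record parities that must be produced by the elimination itself and cannot be transported from the honeycomb case $i=j=k=1$. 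You would need to carry out the sign bookkeeping uniformly in $(i,j,k)$, which is exactly what the tropical-modification computation in \cite{CueMa20} does.
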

	
	The real lifting conditions of bitangents were determined by Cueto and Markwig under the following genericity conditions of the tropical quartic $\Gamma.$
	\begin{definition}[Genericity condition from {\cite[Remark 2.1]{CueMa20}}.]\label{def:generic} A smooth tropical quartic curve $\Gamma$ is called \emph{generic} if all its tropical bitangent lines are non-degenerate and the following three conditions are satisfied:
		\begin{itemize}
			\item[(i)]  if $\Gamma$ contains a vertex $v$ adjacent to three bounded edges with directions $-e_1$,
			$-e_2$ and $e_1 + e_2$, then the shortest of these edges is unique;
			\item[(ii)] A lift $V(f)$ of $\Gamma$ has no hyperflexes, i.e., no bitangent for which the two tangency points coincide;
			\item[(iii)]  the coefficients of $f$, with $V(f)$ a lift of $\Gamma$, are generic enough to guarantee that if the tangencies occur in the relative interior of the same end of $\Lambda$, then the local systems defined by these two points are inconsistent.
		\end{itemize}
	\end{definition}
	The first of these genericity conditions is of special interest when considering a bitangent class of shape (C), since the vertex in Definition \ref{def:generic} (i) is exactly the vertex of the tropical bitangent line of which a bitangent class of shape (C) consists. For an illustration, see Figure~\ref{fig:shapeC}.  We say that a tropical quartic curve is \emph{non-generic with respect to \ref{def:generic}(i)} if it is a smooth tropical curve satisfying conditions (ii) and (iii) of Definition \ref{def:generic}, but not (i).
	
	\begin{figure}
		\centering
		\includegraphics[width=0.175\linewidth]{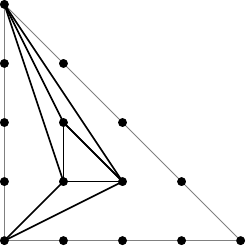}
		\caption{Any tropical quartic with dual subdivision containing this partial subdivision (up to $S_3$) is never generic.}
		\label{fig:nongeneric}
	\end{figure}
	Up to $S_3$ action, there are $8$ unimodular triangulations of $4\Delta_2$ such that any tropical quartic $\Gamma$ dual to such a triangulation can never satisfy condition (i) in Definition \ref{def:generic}. We call these unimodular triangulations \emph{non-generic}. They are refinements of the partial subdivision shown in Figure~\ref{fig:nongeneric}.

	A tropical curve that is non-generic w.r.t.$\,$\ref{def:generic}(i) can have a generic dual subdivision. For an example see Figure \ref{fig:honeycomb}. The locus of non-genericity in the secondary cones of generic tri-angulations was studied by the author in \cite{AlheydisThesis}. For completeness, we present the result here.
	\begin{figure}
		\centering
		\begin{subfigure}[c]{0.4\textwidth}\centering\includegraphics[width=0.5\linewidth]{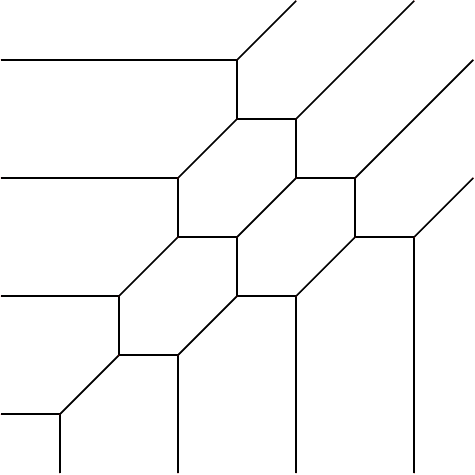}
		\end{subfigure}
		\begin{subfigure}[c]{0.4\textwidth}\centering\includegraphics[width=0.4\linewidth]{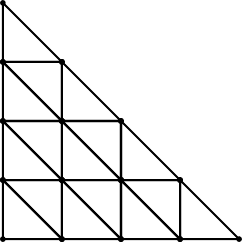}  
		\end{subfigure}
		\caption{The Honeycomb subdivision on the right is a generic subdivision. The tropical curve on the left is dual to this subdivision, but the curve does not satisfy genericity condition~\ref{def:generic}(i).}
		\label{fig:honeycomb}
	\end{figure}
	
	\begin{proposition}[{\cite[Proposition 6.3.4]{AlheydisThesis}}]\label{prop:Cnongenareas}
		A non-genericty locus of the secondary cone of a generic unimodular triangulation containing the motif of bitangent shape (C) is the intersection of the cone with at most three
		half-hyperplanes. These are shown in Table \ref{tab:Cnongenareas}.
	\end{proposition}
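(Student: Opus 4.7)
The plan is to translate Definition \ref{def:generic}(i) into explicit linear (in)equalities on the secondary cone, using that edge lengths of a tropical curve depend linearly on the coefficients of the tropical polynomial. Throughout, I work inside the relatively open secondary cone of a fixed generic unimodular triangulation $\mathcal{T}$ of $4\Delta_2$ whose dual tropical quartic $\Gamma$ contains the motif of bitangent shape (C).

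First I would locate the trivalent vertex $v$ of $\Gamma$ whose three adjacent bounded edges have primitive directions $-e_1,\,-e_2,\,e_1+e_2$; this vertex is dual to the unique triangle of $\mathcal{T}$ corresponding to the (C) motif, and each of the three bounded edges at $v$ is dual to one of the three edges of this triangle inside $\mathcal{T}$. Using the standard duality between $\Gamma$ and $\mathcal{T}$ (coefficients in the $\max$-convention), the length $\ell_i$ of the $i$-th such edge is a specific linear combination of four of the fifteen coordinates $a_{ij}=-\val(a_{ij})$, namely those indexing the four lattice points of the two triangles of $\mathcal{T}$ sharing the dual edge. I would record these three linear forms $\ell_1,\ell_2,\ell_3$ explicitly for each of the finitely many local shapes of $\mathcal{T}$ around the (C) motif; this is what populates Table \ref{tab:Cnongenareas}.

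Next I reformulate Definition \ref{def:generic}(i): the shortest of the three edges fails to be unique precisely when $\min(\ell_1,\ell_2,\ell_3)$ is attained at least twice. This decomposes into the three cases
\begin{equation*}
\{\ell_1=\ell_2\leq \ell_3\},\qquad \{\ell_2=\ell_3\leq \ell_1\},\qquad \{\ell_1=\ell_3\leq \ell_2\}.
\end{equation*}
Each case is cut out by one linear equation together with one linear inequality, so it is a half-hyperplane in $\RR^{15}$. The non-genericity locus is therefore the intersection of the secondary cone with the union of these three half-hyperplanes. Some of the three pieces may become empty after intersection with the open secondary cone of $\mathcal{T}$ (for instance, if the inequality $\ell_i\leq \ell_j$ is already forced to be strict by the cone inequalities defining $\mathcal{T}$), which accounts for the phrase \emph{at most three}.

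The main obstacle is the explicit determination of the linear forms $\ell_1,\ell_2,\ell_3$: it requires a careful case distinction according to how $\mathcal{T}$ resolves the two triangles adjacent to each edge of the (C) triangle, because this adjacency controls which coordinates $a_{ij}$ appear in each length formula. Once those formulas are tabulated, the half-hyperplane description and the upper bound of three follow immediately, and Table \ref{tab:Cnongenareas} is obtained by direct substitution in the remaining cases.
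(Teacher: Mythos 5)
Your argument is correct and is essentially the intended one: the paper gives no proof of this proposition but imports it from the author's thesis, and the structure of Table~\ref{tab:Cnongenareas} (for each of the three pairings, one linear equation setting two of the lattice lengths $\mu_a=\mu_b$ equal plus one linear inequality making the third strictly longer, with each length a linear form in the four coefficients $\lambda_{11},\lambda_{12},\lambda_{21}$ and one of $\lambda_{0i},\lambda_{j0},\lambda_{k,4-k}$ coming from the two triangles adjacent to the dual edge) matches exactly your decomposition of ``the minimum is attained at least twice'' via tropical duality. The only cosmetic discrepancy is that the table uses strict inequalities, so the codimension-two stratum $\mu_1=\mu_2=\mu_3$ lies in the closure of the listed half-hyperplanes rather than in their union as printed; your formulation with $\leq$ covers it, and your explanation of ``at most three'' (some pieces may be empty inside the open secondary cone) is the right one.
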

	\begin{table}[H]
		\centering
		\begin{tabular}{c}
			\hline
			$\mathbf{ \{	\boldsymbol\mu_3 > 	\boldsymbol\mu_2=	\boldsymbol\mu_1\}}$ \\
			\hline $\lambda_{21}-2\lambda_{11}+\lambda_{0i}-(i-1)(\lambda_{12}-\lambda_{11}) = \lambda_{12}-2\lambda_{11}+\lambda_{j0}-(j-1)(\lambda_{21}-\lambda_{11})$\\ 
			$(k-3)\lambda_{12}-(k-1)\lambda_{21}+\lambda_{k,4-k}+\lambda_{11}> \lambda_{12}-2\lambda_{11}+\lambda_{j0}-(j-1)(\lambda_{21}-\lambda_{11})$\\
			\hline
			\hline
			$\mathbf{\{	\boldsymbol\mu_2 > 	\boldsymbol\mu_1=	\boldsymbol\mu_3\}}$\\ \hline $\lambda_{21}-2\lambda_{11}+\lambda_{0i}-(i-1)(\lambda_{12}-\lambda_{11})=(k-3)\lambda_{12}-(k-1)\lambda_{21}+\lambda_{k,4-k}+\lambda_{11}$\\
			$\lambda_{12}-2\lambda_{11}+\lambda_{j0}-(j-1)(\lambda_{21}-\lambda_{11})>\lambda_{21}-2\lambda_{11}+\lambda_{0i}-(i-1)(\lambda_{12}-\lambda_{11})$\\
			\hline
			\hline
			$\mathbf{\{	\boldsymbol\mu_1 > 	\boldsymbol\mu_3=	\boldsymbol\mu_2\}}$\\
			\hline  $\lambda_{12}-2\lambda_{11}+\lambda_{j0}-(j-1)(\lambda_{21}-\lambda_{11}) = (k-3)\lambda_{12}-(k-1)\lambda_{21}+\lambda_{k,4-k}+\lambda_{11}$ \\
			$\lambda_{21}-2\lambda_{11}+\lambda_{0i}-(i-1)(\lambda_{12}-\lambda_{11}) > (k-3)\lambda_{12}-(k-1)\lambda_{21}+\lambda_{k,4-k}+\lambda_{11}$\	
		\end{tabular}
		\medskip
		\caption{The non-generic areas are given by intersecting the three half-hyperplanes as given above with the secondary cone that allows shape (C). Here, $\lambda_{ij}=\val(a_{ij})$ and $\mu_i$ is the lattice length of the edges in condition \ref{def:generic}(i) with direction vector $-e_i$, where $-e_3:= e_1+e_2$.}\label{tab:Cnongenareas}
	\end{table}

	Apriori, it is not clear that the lifting conditions for bitangents of shape (C) that were computed in \cite{CueMa20} hold for tropical quartics arising from these non-generic loci in the secondary cones. In the following, we see that the patchworking technique can solve this problem.
	We first turn towards tropical curves with a generic dual triangulation for which the coefficients of the curve are contained in a non-genericity locus as in Table \ref{tab:Cnongenareas}.

	\begin{proposition}\label{prop:Cliftsgen}
		Let $\mathcal{T}$ be a generic unimodular triangulation of $4\Delta_2$ that contains the dual motif of a bitangent class of shape (C). Then the real lifting conditions of bitangent class (C) are continuous over the interior of the secondary cone $\Sigma(\mathcal{T}).$ 
	\end{proposition}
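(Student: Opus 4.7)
My plan is to combine Viro's patchworking with the classical Zeuthen correspondence between real topology and real bitangent count. Fix a sign distribution $\delta$, and let the coefficient vector $\lambda$ range over the interior of $\Sigma(\mathcal{T})$; by definition every such $\lambda$ induces the subdivision $\mathcal{T}$ and represents a non-singular real quartic $C_\lambda$. Theorem~\ref{thm:viro} provides a homeomorphism $\PP^2_\RR\to\mathbb{T}\RR^2$ taking $C_\lambda(\RR)$ to $\RR\Gamma_\delta$, which depends only on the combinatorial data $(\mathcal{T},\delta)$ and not on the particular $\lambda$. Hence the real topological type of $C_\lambda$ (number of ovals and nesting pattern) is constant throughout the interior of $\Sigma(\mathcal{T})$.

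Next I would invoke Zeuthen's classical classification \cite{Zeu73}, recalled in the introduction and partially captured by Proposition~\ref{prop:dividing}: the real topological type of a smooth real quartic pins down the total number of real bitangents as one of $4,4,8,16,28$. Combined with the previous paragraph, the total real bitangent count of $C_\lambda$ is a constant function of $\lambda$ on the interior of $\Sigma(\mathcal{T})$. Moreover, among the seven bitangent shapes classified in \cite{CueMa20}, only shape (C) uses genericity condition~\ref{def:generic}(i) in its derivation, since this condition governs precisely the vertex of the tropical bitangent line underlying shape (C); the sign conditions in \cite[Table 11]{CueMa20} for the other six shapes are derived using only conditions (ii) and (iii) of Definition~\ref{def:generic}, which hold on a dense open subset of $\Sigma(\mathcal{T})$. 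Consequently, the real bitangent count contributed by the non-(C) classes depends only on $\delta$ and is constant on the interior of the cone.

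Subtracting these two constants, the number of real lifts from shape (C) is itself locally constant on the (connected) interior of $\Sigma(\mathcal{T})$. Since \eqref{eq:C} yields the correct count on the dense generic locus by \cite{CueMa20}, the same formula gives the correct count on the non-generic locus of Table~\ref{tab:Cnongenareas}, which establishes the claimed continuity. The main obstacle will be the claim about the non-(C) shapes: one must verify carefully that the Cueto--Markwig derivations of their lifting conditions never invoke \ref{def:generic}(i), and equivalently that no bitangent can silently migrate between shape (C) and another shape as $\lambda$ crosses the non-generic stratum. Once this separation of contributions is pinned down, the patchworking-plus-Zeuthen bookkeeping closes the argument.
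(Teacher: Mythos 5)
Your argument is correct and is essentially the paper's own proof: the paper likewise combines the patchworking observation that the real topology (hence, via Zeuthen, the total real bitangent count) is constant on the interior of the secondary cone with the fact that the lifting conditions of the six non-(C) classes do not involve condition~\ref{def:generic}(i), and then concludes that the shape~(C) conditions must extend consistently. The only difference is presentational: you make the Zeuthen step and the potential concern about the non-(C) classes explicit, whereas the paper states both as immediate.
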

	In other words, the lifting conditions hold for any smooth tropical curve dual to $\mathcal{T},$ even if the coefficients of this curve come from one of the non-genericity half-hyperplanes from Proposition \ref{prop:Cnongenareas}, i.e., if the curve does not satisfy the genericity condition \ref{def:generic}(i).
	\begin{proof}
		This follows directly from the patchworking principle: The real topology of an algebraic lift of the curve does not depend on the length of the edges in the tropical curve, but only on the combinatorial type. Thus, given a sign distribution, the number of real bitangents of the lift is determined for any element in the secondary fan. The lifting conditions for all other bitangent classes are independent of the genericity condition \ref{def:generic}(i). Therefore, they are the same when passing into the non-generic locus of the secondary cone. It follows that the lifting conditions for bitangents of shape (C) need to be consistent as well. Otherwise, the number of real bitangents could not be constant for the whole secondary cone.
	\end{proof}
	
	When considering real bitangents of a curve, one can always also ask whether the two tangency points are also real. If that is the case, the bitangent line is called \emph{totally real}. It is a result by Cueto and Markwig \cite{CueMa20} that real bitangents of generic smooth tropical quartic curves are always totally real. We see in the following, that we can extend this to smooth tropical curves which are non-generic w.r.t. \ref{def:generic}(i)  as long as the triangulation is generic.
	
	\begin{theorem}\label{thm:lift1}
		Let $\Gamma$ be a tropical quartic curve with a bitangent class of shape (C), dual to a generic unimodular triangulation. Assume that $\Gamma$ does not satisfy the genericity constraint, i.e. the tropical coefficients of $\Gamma$ lie in one of the regions of the secondary cone indicated by Table \ref{tab:Cnongenareas}.
		Then, whenever bitangent class (C) lifts to 4 real bitangents, these are totally real.
	\end{theorem}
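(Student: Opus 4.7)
The plan is to deform the non-generic coefficients into the generic locus of the same secondary cone and to track the tangency points of the four real bitangents by continuity.

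First I would fix the sign distribution $\delta$ and let $\Gamma$ be the given quartic with tropical coefficient vector $\lambda=(\val(a_{ij}))$ lying on one of the walls of Table \ref{tab:Cnongenareas} inside $\Sigma(\mathcal{T})$. Since $\mathcal{T}$ is generic, $\Sigma(\mathcal{T})$ contains points in its interior satisfying \ref{def:generic}(i), and I would choose a continuous path $\lambda(s)$, $s\in[0,1]$, with $\lambda(0)=\lambda$ inside $\Sigma(\mathcal{T})$ and with $\lambda(s)$ off all non-genericity walls of Table \ref{tab:Cnongenareas} for $s>0$. Lifting this to a family of real algebraic quartics $V(f_s)$ with the prescribed signs $\delta$, I would further arrange that conditions \ref{def:generic}(ii) and \ref{def:generic}(iii) hold throughout, which is possible because each of them cuts out a proper closed subset of the coefficient space.

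The next step is to invoke Proposition \ref{prop:Cliftsgen}: the bitangent class (C) lifts to $4$ real bitangents at every $V(f_s)$ along the path. Each of these four bitangent lines, together with its two tangency points $P_s, Q_s$, depends continuously (indeed algebraically) on $s$. By Galois equivariance of the real structure, the intersection divisor $2P_s+2Q_s$ is real, so $P_s$ and $Q_s$ are either both real or form a complex conjugate pair. A transition between these two regimes along the path would force $P_s=Q_s$, i.e.\ a hyperflex, which is excluded by \ref{def:generic}(ii). Hence the real/complex status of the tangency points of each of the four real bitangents is constant in $s$.

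At $s=1$ the curve $V(f_1)$ satisfies all three conditions of Definition \ref{def:generic}, so by Cueto and Markwig's result \cite{CueMa20} all four real bitangents from class (C) are totally real. By the constancy established above, the same holds at $s=0$, which is the original non-generic quartic $\Gamma$. The main obstacle I expect is arranging the deformation to stay in $\Sigma(\mathcal{T})$ with the prescribed signs while simultaneously avoiding the closed discriminant loci of conditions \ref{def:generic}(ii) and \ref{def:generic}(iii); this is handled by the openness of (ii)--(iii) and the positive codimension of the walls of Table \ref{tab:Cnongenareas}, allowing a generic path to meet the non-generic locus only at $s=0$.
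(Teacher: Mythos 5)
Your proposal is correct and follows essentially the same route as the paper: both arguments connect the non-generic quartic to generic quartics in the interior of the same secondary cone with the same sign distribution, invoke Cueto--Markwig's total reality for those generic curves, and transfer the conclusion back by continuity of the tangency points. The only difference lies in the final step, where the paper uses the explicit Len--Markwig formulas for the tangency points together with the closedness of $\RRt$ to conclude that limits of real tangency points are real, whereas you rule out a real-to-complex-conjugate transition along the path by noting it would force a hyperflex, excluded by Definition~\ref{def:generic}(ii); both ways of closing the argument are sound.
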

	\begin{proof}
		Fix a lift $C$ of $\Gamma$ with a given sign-distribution $\delta$. Fix an approximating series of algebraic quartics $C_{\alpha}$ with the same sign distribution $\delta$ and the same topology, set $C_0=C.$
		We can choose these such that the following criteria are satisfied: they have the same dual triangulation $\mathcal{T}(\Gamma)$,  $\trop(C_{\alpha})$ converges to $\Gamma$, and all these curves satisfy the genericity constraint. By \cite{CueMa20} all real bitangents of $C_{\alpha}$, with $\alpha\neq 0$, are totally real. In particular, the real bitangents that tropicalize to the bitangent class of shape (C) are totally real.
		By \cite{LeMa19} the tangency points of the real bitangents are computed using the coefficients of the quartic and some of their radicands. Hence, if the coefficients of $C_{\alpha}$ converge to the coefficients of $C,$ so do the coordinates of the tangency points of the real bitangents.
		As $\RRt$ is closed, the bitangents of $C$ are all totally real, especially the ones tropicalizing to bitangent class~(C).
	\end{proof}
	Now we can turn to tropical curves which are dual to non-generic unimodular triangulations. The patchworking technique makes an algorithmic approach possible to determine the lifting condition for the missing bitangent class. This is formalized in Algorithm \ref{algo:Clifts}.
	\begin{algorithm}[h] \label{alg:one}
		\caption{Computing the lifting of bitangent shape (C)}
		\label{algo:Clifts}
		\begin{algorithmic}[1]
			\Require{Non-generic unimodular regular triangulation $\mathcal{T}$ of $4\Delta_2$.}
			\Ensure{All sign distributions for which bitangent class (C) lifts.}
			\ForEach{Sign vector $v\in\{\pm 1\}^{14}$} 
			\State Compute the number $n$ of real bitangents for all bitangent classes not of shape (C)
			\State Compute the Betti Number $b$ of the real patchworking curve 
			\If{$b=2$ }
			\State Compute whether the curve is dividing using twisted edges 
			\NoNumber{\textcolor{gray}{(if it is, the two ovals are nested)}}
			\EndIf
			\State Compute the number $N$ of real bitangents that the curve has by using the number of ovals $b$ and whether the curve is dividing.
			\If{$n\neq N$} 
			\State Collect $v$ as a sign vector for which the class of shape $(C)$ lifts.
			\EndIf
			\EndFor 
		\end{algorithmic}
	\end{algorithm}
	
	\begin{theorem}\label{thm:lift}
		Let $\Gamma$ be a smooth tropical quartic curve that does not satisfy the genericity condition \ref{def:generic}(i). Then $\Gamma$ has a bitangent class of shape (C) which lifts over $\RR$ if and only if the sign conditions in \eqref{eq:C} are satisfied. 
	\end{theorem}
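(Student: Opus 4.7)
The plan is to use Algorithm \ref{algo:Clifts} together with Proposition \ref{prop:dividing} and Zeuthen's classification to reduce the statement to a finite case check. Since the condition \ref{def:generic}(i) fails only for curves whose dual triangulation refines one of the $8$ non-generic partial subdivisions (up to $S_3$), and since for each such triangulation there are only $2^{14}$ sign distributions on the lattice points, the \emph{if and only if} statement is genuinely a finite computation. The real point is to explain why the algorithm correctly certifies lifting of class (C).

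First I would observe that the lifting conditions for the other six bitangent shapes are independent of condition \ref{def:generic}(i) (they are established in \cite{CueMa20,1GP21} using only conditions (ii) and (iii), which we do assume). So for each sign distribution $\delta$ I can compute, from the sign conditions of \cite[Table 11]{CueMa20}, the number $n$ of real bitangents coming from the non-(C) classes. By Viro's patchworking theorem (Theorem \ref{thm:viro}), applying patchworking with $\delta$ to $\Gamma$ produces a real algebraic quartic $C$ with prescribed topology; this topology is read off combinatorially from the real part $\RR\Gamma_\delta$, and for the count of ovals plus the dividing test we use the twisted edge criterion and Haas's theorem recalled in Section \ref{sec:pre}. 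Zeuthen's classification then reads off the total number $N$ of real bitangents of $C$: namely $N \in \{4,8,16,28\}$ depending on the topological type (with Proposition \ref{prop:dividing} distinguishing the two $N=4$ cases by nesting).

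Now the combinatorial argument: every real bitangent of $C$ tropicalizes into exactly one of the seven bitangent classes, so the $4$ real bitangents per lifting (C)-class account for exactly $N-n$ real bitangents, and lifting of the (C)-class is equivalent to $N-n > 0$. Hence the algorithm's output precisely characterizes the sign distributions $\delta$ for which class (C) lifts. To finish the theorem I would run through the (finitely many) non-generic triangulations and verify that for every sign distribution the predicate ``$N-n>0$'' coincides exactly with the predicate ``the inequalities \eqref{eq:C} hold''. This comparison is a pure bookkeeping exercise on the signs at the six lattice points appearing in \eqref{eq:C} plus the twisted-edge data.

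The main obstacle, in my view, is not conceptual but structural: the case check must be carried out, and for a clean proof one wants to avoid listing all $8 \cdot 2^{14}$ cases by hand. The natural remedy is to implement the algorithm (as described in Section \ref{sec:ext} via the \texttt{polymake} extension \texttt{TropicalQuarticCurves}) and appeal to the resulting computer verification; alternatively, one can reduce the $2^{14}$ signs to a much smaller set of equivalence classes by exploiting the fact that the only data relevant to \eqref{eq:C}, to the non-(C) sign conditions, and to the twisted-edge parities, are the signs modulo the symmetries of the motif, after which a short uniform argument on each of the $8$ triangulation types suffices. Either way, the proof follows the pattern: patchworking $+$ Zeuthen $+$ known conditions for non-(C) shapes $\Rightarrow$ conditions for (C).
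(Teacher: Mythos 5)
Your treatment of the non-generic triangulations is essentially the paper's proof: compute the number $n$ of real lifts from the six non-(C) classes via the known sign conditions, determine the real topology of a patchworked lift via twisted edges and the dividing criterion, read off the total $N$ from Zeuthen's classification (using Proposition \ref{prop:dividing} to separate the two four-bitangent cases), and conclude that class (C) lifts exactly when $N-n>0$; the comparison with \eqref{eq:C} is then delegated to the \texttt{polymake} computation of Algorithm \ref{algo:Clifts}. That part is fine.

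There is, however, a genuine gap in your opening reduction: you assert that condition \ref{def:generic}(i) ``fails only for curves whose dual triangulation refines one of the $8$ non-generic partial subdivisions.'' This is false, and the paper explicitly exhibits a counterexample (Figure \ref{fig:honeycomb}): the honeycomb triangulation is generic, yet curves whose coefficients lie in the non-genericity loci of Table \ref{tab:Cnongenareas} inside its secondary cone still violate \ref{def:generic}(i). Your finite case check over the $8$ non-generic triangulations therefore does not cover all curves in the hypothesis of the theorem. The paper handles the missing case separately via Proposition \ref{prop:Cliftsgen}: for a generic triangulation the number of real bitangents and the lifting conditions of the other six classes are constant over the interior of the secondary cone (patchworking depends only on the combinatorial type), so the conditions \eqref{eq:C}, already established by Cueto--Markwig at generic points of that cone, must persist on the non-generic half-hyperplanes by consistency of the total count. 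Your $N-n$ argument could in principle be extended to these curves as well, but as written your proof simply omits them; you need either the continuity argument over the secondary cone or an explicit extension of your case analysis beyond the $8$ non-generic triangulations.
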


	\begin{proof} If $\mathcal{T}(\Gamma)$ is a generic unimodular triangulations, this is Proposition \ref{prop:Cliftsgen}. For non-generic triangulations, this follows from \cite[Table 11]{CueMa20}, \cite{2GP21}, Proposition \ref{prop:dividing}, and the computation according to Algorithm \ref{algo:Clifts}. To be more precise, a non-generic triangulation has 6 bitangent classes for which we can compute the number of real lifts using \cite[Table 11]{CueMa20} and \cite{2GP21}. Since the number of real bitangents is uniquely determined by the topology of the real quartic curve \cite{Zeu73}, it is now possible to determine the real lifting of the seventh bitangent class, the one of type (C), by comparing the number of real lifts from the other 6 bitangent classes with the number of real bitangents expected to exist due to the number and types of ovals of the real curve. This is done by Algorithm \ref{algo:Clifts}. It uses Proposition \ref{prop:dividing} to determine the topology of the real curve by patchworking and twisted edges as explained in Section~\ref{sec:pre}. 
		The implementation of Algorithm \ref{algo:Clifts} in \texttt{polymake} uses the implementation of patchworking and real tropical hypersurfaces from Joswig and Vater \cite{JV20}.
	\end{proof}

	A consequence from the the computations in \cite{CueMa20} is that every real bitangent of a real lift of a generic smooth tropical quartic curve is totally real. We cannot say in general whether this still holds for bitangents of class (C) when the tropical quartic is dual to a non-generic unimodular triangulation, since the exact coefficients using tropical modifications cannot be determined in this case. However, we have the following result.
	
	\begin{proposition}\label{prop:totallyreal}
		For a smooth tropical quartic curve dual to a non-generic unimodular triangulation, at most one of the four real lifts from the bitangent class of shape (C) is not totally real.
	\end{proposition}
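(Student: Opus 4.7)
The plan is to reduce the non-generic-triangulation situation to the generic case by a coefficient-perturbation argument, in the same spirit as the proof of Theorem \ref{thm:lift1}, and then to use a codimension count to show that only one of the four shape (C) bitangents can lose its totally real status in the limit.

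Fix a smooth tropical quartic $\Gamma$ dual to a non-generic unimodular triangulation $\mathcal{T}$ of $4\Delta_2$, together with a sign distribution $\delta$ for which, by Theorem \ref{thm:lift}, the shape (C) bitangent class lifts to four real bitangents. Choose an algebraic lift $C$ of $\Gamma$ realizing $\delta$. Since $\mathcal{T}$ lies on the boundary between several generic secondary cones in the secondary fan of $4\Delta_2$, I would construct a one-parameter family $(C_\alpha)_{\alpha\in[0,1]}$ of smooth real plane quartics with $C_0=C$ such that for every $\alpha>0$ the tropicalization $\Gamma_\alpha=\trop(C_\alpha)$ is dual to a generic unimodular triangulation adjacent to $\mathcal{T}$, the sign distribution of the coefficients of $C_\alpha$ equals $\delta$, and both the real topology and the number of real shape (C) lifts of $C_\alpha$ are preserved along the family. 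This is possible because the valuations of the coefficients move through the non-genericity hyperplane described by Table \ref{tab:Cnongenareas} into the interior of a neighbouring maximal secondary cone; the signs of the coefficients are unaffected.

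By \cite[Corollary 7.3]{CueMa20}, every real bitangent of $C_\alpha$ for $\alpha>0$ is totally real, since $\Gamma_\alpha$ now satisfies the full genericity condition of Definition \ref{def:generic}. The Len-Markwig formulas from \cite{LeMa19} express the tangency points of the four shape (C) bitangents as algebraic functions of the coefficients, given by expressions involving radicals, indexed by an independent pair of sign choices $(\sigma_1,\sigma_2)\in\{\pm1\}^2$. These tangency coordinates therefore vary continuously (indeed algebraically) with $\alpha$, and because $\RR$ is complete, any tangency point that remains real and well-defined as $\alpha\to 0$ persists as a real tangency of $C$.

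The key remaining step is to argue that at most one of the four sign choices $(\sigma_1,\sigma_2)$ can produce a tangency point pair that collides into a complex conjugate pair in the limit. Here I would use the fact that the non-genericity locus of Table \ref{tab:Cnongenareas} is cut out by a single equality among the lattice lengths $\mu_1,\mu_2,\mu_3$ of the three bounded edges at the non-generic vertex: only one of the four radicands in the Len-Markwig parameterization of shape (C) specialises to zero along this hyperplane, while the remaining three radicands are controlled by strictly signed quantities on the open secondary cone and thus keep definite sign in the limit. Consequently three of the four tangency-point pairs remain real and distinct as $\alpha\to0$, giving three totally real bitangents of $C$; only the fourth can degenerate into a real bitangent with a pair of complex conjugate tangency points, proving the bound.

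The main obstacle I anticipate is the explicit bookkeeping in the last step: matching the four $(\sigma_1,\sigma_2)$-branches of the Len-Markwig formulas to the three edges of the non-generic vertex and verifying that exactly one radicand vanishes linearly along each of the hyperplanes $\mu_i=\mu_j$ in Table \ref{tab:Cnongenareas}. Once this combinatorial matching is established, continuity together with \cite[Corollary 7.3]{CueMa20} yields the claim.
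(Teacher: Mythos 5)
Your approach has a genuine gap, and it is a structural one rather than a bookkeeping issue. The proposition concerns quartics dual to a \emph{non-generic unimodular triangulation}: one of the eight triangulations refining the partial subdivision of Figure \ref{fig:nongeneric}, for which \emph{no} point in the interior of the (full-dimensional) secondary cone yields a curve satisfying condition \ref{def:generic}(i). There is therefore no non-genericity hyperplane to cross: Table \ref{tab:Cnongenareas} and Proposition \ref{prop:Cnongenareas} describe the non-genericity loci \emph{inside generic secondary cones}, and your "single equality among $\mu_1,\mu_2,\mu_3$ cut out by a hyperplane" picture simply does not apply here. Perturbing the valuations into an adjacent maximal secondary cone changes the dual triangulation, hence the entire combinatorial type of the tropical curve --- its bitangent classes, their shapes, and the Len--Markwig radicands are not matched across the wall, and neither the real topology nor the count of shape (C) lifts is preserved a priori. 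This is exactly why the degeneration argument of Theorem \ref{thm:lift1} (which perturbs \emph{within} a generic secondary cone) is unavailable; the paper states explicitly that "the exact coefficients using tropical modifications cannot be determined in this case." Note also that if your continuity argument did go through, the closedness of $\RRt$ would force \emph{all four} tangency-point pairs to stay real, proving total reality outright --- a conclusion the author explicitly cannot reach --- which is a further sign the setup fails.

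The paper's actual proof is entirely different and much shorter: it applies Klein's formula $I+2B=d(d-2)$ relating the number $I$ of real inflection points to the number $B$ of real but not totally real bitangents. By the tropical count of real inflection points in \cite[Section 5.4]{brugallé2012inflectionpointsrealtropical}, one has $I=6$ for these non-generic triangulations, while $d(d-2)=8$, so $2B\le 2$ and hence $B\le 1$. If you want to salvage a perturbation-style argument you would need to work within the fixed non-generic secondary cone, where genericity (i) can never be restored; the Klein-formula route sidesteps this entirely.
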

	\begin{proof}
		Klein's Formula connects the number of real inflection points $I$ with the number of real but not totally real bitangents $B$: $$I+2B = d(d-2).$$ By \cite[Section 5.4]{brugallé2012inflectionpointsrealtropical}, the count of real inflection points for any non-generic smooth triangulation away from the non-genericity yields $6$, and $d(d-2)=8.$ Hence, we conclude that for a smooth tropical quartic non-generic w.r.t. \ref{def:generic}(i) at most 1 real bitangent from the bitangent class of shape (C) is not totally real.   
	\end{proof}

	Using Theorem \ref{thm:lift}, we can now update the computational result on the distribution of numbers of real bitangents among the generic unimodular triangulations \cite[Theorem 4.2]{2GP21} to contain all unimodular triangulations up to symmetry. The data set with which we work is based on the original computation of all unimodular triangulations of $4\Delta_2$ from \cite{BJMS15} using \texttt{TOPCOM}~\cite{TOPCOM}.
	
	\begin{theorem} 
		The distribution of the number of real bitangents among the 1278 smooth unimodular triangulations
		of $4\Delta_2$ is reported in Table \ref{tab:my_label}. Every unimodular triangulation of $4\Delta_2$ admits a smooth tropical curve with a lift to
		a real plane curve with $28$ real bitangents.
		
		\begin{table}[H]
			\centering
			\begin{tabular}{ccccccc}
				$\{4\, 8\, 16\, 28\}$  & $\{4\, 8\, 28\}$ & $\{4\, 16\, 28\}$ & $\{8\, 16\, 28\}$ & $\{4\, 28\}$ & $\{8\, 28\}$ & $\{16 \,28\}$\\ \hline
				1207 & 15 & 27 &  18 &  6&  3 & 2   
			\end{tabular}
			\caption{Distribution of numbers of real bitangents among the dual subdivisions of smooth tropical quartic curves up to $S_3$ action.} 
			\label{tab:my_label}
		\end{table}
		
	\end{theorem}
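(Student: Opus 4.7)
The plan is to reduce the statement to a finite computation, then perform it using the infrastructure set up in Theorem~\ref{thm:lift} and Algorithm~\ref{algo:Clifts}. Up to the $S_3$ action on $4\Delta_2$, there are exactly $1278$ smooth unimodular triangulations (by \cite{BJMS15}), and by Definition~\ref{def:real} each triangulation $\mathcal{T}$ carries only $2^{14}$ sign distributions $\delta$ on its lattice points (the $15$ points of $4\Delta_2\cap\mathbb{Z}^2$ modulo global sign). By Zeuthen's classification the number of real bitangents of any lift lies in $\{4,8,16,28\}$, so the whole distribution is encoded in at most $1278\cdot 2^{14}$ data points.

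First I would loop over the $1278$ triangulations, using the dataset from \cite{BJMS15} computed with \texttt{TOPCOM}. For each $(\mathcal{T},\delta)$ I would count the real lifts of the six bitangent classes not of shape (C) by directly applying the sign conditions of \cite[Table 11]{CueMa20} as implemented in \cite{2GP21}; these conditions do not depend on the genericity constraint \ref{def:generic}(i) and so they are available for all $\mathcal{T}$. For the seventh class, shape (C), I would split into two cases: when $\mathcal{T}$ is generic, the sign condition \eqref{eq:C} is directly applicable by Proposition~\ref{prop:Cliftsgen}; when $\mathcal{T}$ is one of the non-generic triangulations refining the motif of Figure~\ref{fig:nongeneric}, Theorem~\ref{thm:lift} guarantees that \eqref{eq:C} still governs lifting. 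Summing, I obtain the total number of real bitangents for each $(\mathcal{T},\delta)$, which must be one of $4,8,16,28$.

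For each triangulation $\mathcal{T}$ I then collect the set $S(\mathcal{T})\subseteq\{4,8,16,28\}$ of values attained as $\delta$ ranges over all $2^{14}$ sign patterns, and tally how many triangulations produce each set $S$. The seven columns of Table~\ref{tab:my_label} correspond to the seven sets that actually occur (the remaining subsets of $\{4,8,16,28\}$ turn out to be empty), and reading off the tallies produces the seven entries $1207,15,27,18,6,3,2$. The second sentence of the theorem is the assertion that $28\in S(\mathcal{T})$ for every $\mathcal{T}$; this is verified by observing that the column $\{28\}$ does not appear in the table and that $28$ appears in every column heading, i.e.\ the computation exhibits for each $\mathcal{T}$ at least one sign pattern producing four ovals.

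The main obstacle is not conceptual but computational: the loop involves $1278\cdot 2^{14}\approx 2.1\cdot 10^7$ instances, and each instance requires, in the non-generic case, the full patchworking computation of Algorithm~\ref{algo:Clifts}---determining the number of real ovals via Viro's theorem, then testing the dividing property via the twisted-edge criterion of \cite{Haa97} to distinguish nested from non-nested pairs of ovals via Proposition~\ref{prop:dividing}. This is handled by the updated \texttt{polymake} extension \texttt{TropicalQuarticCurves} built on the real hypersurface routines of Joswig--Vater~\cite{JV20}, and the output (stored in \texttt{polyDB}) is exactly Table~\ref{tab:my_label}.
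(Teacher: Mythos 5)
Your proposal is correct and follows essentially the same route as the paper: the paper gives no separate proof for this theorem, presenting it as a purely computational update of \cite[Theorem 4.2]{2GP21} in which the previously missing non-generic triangulations are handled via Theorem~\ref{thm:lift} (equivalently, Algorithm~\ref{algo:Clifts}), with the loop over all $1278\cdot 2^{14}$ combinatorial types carried out in the updated \texttt{polymake} extension and archived in \texttt{polyDB}. Your case split (generic triangulation via \cite{CueMa20}/Proposition~\ref{prop:Cliftsgen}, non-generic via Theorem~\ref{thm:lift}) and the tallying of the attained subsets of $\{4,8,16,28\}$ match the paper's intent exactly.
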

	
	We close this section with the final count of real tropical quartics and their bitangents. 
	\begin{theorem}\label{thm:main}
		Table \ref{tab:main} shows the distribution of combinatorial types (Definition~\ref{def:real}) of real tropical quartic curves according to their number of real bitangents and real topological type: There are $\sim 28,7\%$ of combinatorial types of real tropical quartics with $1$ oval and $4$ real bitangents, $\sim 10,3\%$ with 2 nested ovals  and $4$ real bitangents, $\sim 32,3 \%$ with 2 non-nested ovals and $8$ real bitangents, $\sim 20,5\%$ with $3$ ovals and $16$ real bitangents, and $\sim 8,1\%$ with $4$ ovals and $28$ real bitangents.
	\end{theorem}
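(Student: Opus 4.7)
The proof strategy is a systematic computer-assisted enumeration, built on the theoretical results established earlier in the paper. The plan is to iterate over all $1278$ unimodular triangulations of $4\Delta_2$ up to $S_3$ symmetry (produced in \cite{BJMS15} via \texttt{TOPCOM}) and over all $2^{14}$ sign distributions on the interior lattice points, and for each resulting combinatorial type $(\mathcal{T},\delta)$ record both the number of real bitangents and the real topological type. Summing the outcomes over all $1278\cdot 2^{14}$ types produces the unnormalized row of Table \ref{tab:main}; averaging orbit-wise over the $S_3$ action produces the second count, and the quoted percentages follow by division.

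For the bitangent count I would proceed class by class. For the six bitangent shapes other than (C), I would apply the sign conditions from \cite[Table 11]{CueMa20}, already implemented in the extension \texttt{TropicalQuarticCurves} \cite{2GP21,1GP21}. For the shape (C) class, I would invoke Theorem \ref{thm:lift}, which certifies that the sign condition \eqref{eq:C} correctly describes the lifting behavior even when $\mathcal{T}$ is non-generic with respect to \ref{def:generic}(i); this is precisely the previously missing case, and handling it is what makes the enumeration exhaustive.

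For the real topology I would use patchworking. From $\delta$ I compute the set of twisted edges via \cite[Proposition 3.12]{texier2021topologyrealalgebraiccurves}, then apply Viro's Theorem \ref{thm:viro} to extract the number of ovals of $\RR\Gamma_\delta$; Haas's criterion \cite[Theorem 4.7]{texier2021topologyrealalgebraiccurves} decides whether the curve is dividing, which by Proposition \ref{prop:dividing} is what distinguishes the nested from the non-nested two-oval case. Zeuthen's classical correspondence between the topological type and the bitangent count provides a built-in consistency check: after classifying each $(\mathcal{T},\delta)$ by topology, the computed number of real bitangents must match one of the values $4,4,8,16,28$ prescribed by Zeuthen, so any discrepancy would signal an implementation error.

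The main obstacle is computational rather than conceptual: the timing reported in Section \ref{sec:ext} is about 73 minutes per triangulation, so the full sweep runs on the order of weeks of CPU time and requires careful engineering to be feasible. To keep this tractable I would reuse the \texttt{polymake} patchworking and real-hypersurface routines of Joswig--Vater \cite{JV20}, the bitangent routines from \cite{2GP21}, and Algorithm \ref{algo:Clifts} to handle non-generic triangulations in a uniform way. The resulting dataset would be deposited in the \texttt{Tropical.QuarticCurves} collection on \texttt{polyDB} \cite{polydb:paper}, both for reproducibility and to allow independent verification of the entries in Table \ref{tab:main}.
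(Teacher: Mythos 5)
Your proposal matches the paper's proof: Theorem \ref{thm:main} is established by exactly the computational sweep you describe, running the updated \texttt{polymake} extension over all $1278\cdot 2^{14}$ combinatorial types, using Theorem \ref{thm:lift} to handle shape (C) on non-generic triangulations, patchworking with twisted edges for the topology, and obtaining the full count from the $S_3$ representatives via orbit sizes (the paper multiplies each representative by its orbit size rather than averaging, but this is the same bookkeeping). The approach and all supporting ingredients are essentially identical to the paper's.
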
 
	The proof of this theorem is purely computational. To achieve the results in Table \ref{tab:main} the computations to determine  the number of real bitangents and real topological type of the real tropical quartic were done for all $1278\cdot 2^{14}$ real tropical quartic curves up to $S_3$ symmetry using the updated \texttt{polymake} extension \texttt{TropicalQuarticCurves} accompanying this paper. The full count in the second to last line of Table \ref{tab:main} was achieved by multiplying each representative of the $S_3$ action with its orbit size. The data corroborating the count in Table \ref{tab:main} is available on \texttt{polyDB}.

	\section{Data and Implementation}\label{sec:ext}
	The documentation and availability of mathematical research data, in particular in combination with computational procedures and results, is becoming more important every day\footnote{\url{https://www.mardi4nfdi.de/}}~\cite{boege2022researchdatamanagementplanninggerman}. This project is a particular example of this, as it obtains new results by extending data and code from a previous project \cite{1GP21}, which itself was based on data computed for a different paper~\cite{BJMS15}.
	To allow for more such examples, the code and computed data from this project are made available to the research community according to the FAIR data principles~\cite{FAIR}.  This is particularly important when the original computation of the data is expensive in order to avoid recomputation. That is the case for the data of Theorem \ref{thm:main}, which was expensive in terms of computation time. Since the calculations were performed using \texttt{polymake}, the code is made available as an update to the already existing extension \texttt{TropicalQuarticCurves}. As medium for the computed data we use the database collection \texttt{polyDB}. The already existing collection on tropical quartic curves is extended by the new properties.

	\smallskip
	
	\paragraph{\texttt{polymake} extension.}
	The \polymake extension \texttt{TropicalQuarticCurves} presented in \cite{2GP21} is expanded by the new functions and the collection of tropical quartic curves on \texttt{polyDB} is updated to contain the full information on all non-generic triangulations, which were previously missing.
	The new version of the extension is available in the \texttt{polymake} wiki \footnote{\url{https://polymake.org/doku.php/extensions/tropicalquarticcurves}} and on GitHub \footnote{\url{https://github.com/AlheydisGeiger/TropicalQuarticCurves-0.2}}.

	In the updated version, you can now
	\begin{itemize}
		\item compute the number of real bitangents for any unimodular triangulation of $4\Delta_2$ and a given sign vector,
		\item obtain the sign conditions for any unimodular triangulation of $4\Delta_2$ also for the bitangent class of shape (C),
		\item work with a real tropical curve and determine whether it is dividing, compute its twisted edges and number of ovals.
	\end{itemize}
	In the following we showcase the new objects and commands in the extension for each application and the database.
	\medskip

	\textit{Application tropical.} 
	We introduce a new object \verb|RealQuarticCurve|, which consists of a tropical quartic curve together with a sign distribution. 
	\begin{lstlisting}
tropical > $C = new RealQuarticCurve<Min>(COEFFICIENTS=>[5,1,2,2,0,0,4,0,1,16,7,9,12,16,33],SIGNS=>[1,1,1,1,1,1,1,1,1,1,1,1,1,1,1]);	
	\end{lstlisting}
	One can still access the tropical quartic curve,
	as well as its properties like \verb|BITANGENT_SHAPES|, which gives the shapes of the bitangent classes for the given point in the secondary fan of $\mathcal{T}(\Gamma).$
	\begin{lstlisting}
tropical > $Q= $C->QUARTIC_CURVE;
tropical > print $C->BITANGENT_SHAPES;
A H` J J G T W
	\end{lstlisting}
	The tropical bitangent classes that lift for the given sign distribution can be accessed as follows:
	\begin{lstlisting}
tropical > print $C->N_REAL_BITANGENTS;
4
tropical > print $C->REAL_BITANGENT_TYPES;
EFJ G GKUTT` W...HH+(xz)
tropical > print $C->REAL_BITANGENT_SHAPES;
J G T W
tropical > $Bitangentclasses = $C->REAL_BITANGENTS;
		
tropical > for my $i (0..$C->N_REAL_BITANGENTS-1) {
tropical (2)> print $Bitangentclasses->[$i]->SIGN_CONDITIONS;
tropical (3)> }
{5 10}
{}
{5 10}
{}
{5 10}
{}
{}
{}
	\end{lstlisting}
	
	Further, we can decide whether the real curve is dividing and how many ovals an real algebraic lift would have.
	\begin{lstlisting}
tropical > print $C->IS_DIVIDING;
false
tropical > print $C->N_OVALS;
3
	\end{lstlisting}
	For a curve with two ovals, this would tells us whether or not the two ovals are nested: If the curve is not dividing, the two ovals are not nested.
	To gain a picture, you can access the patchworking functionality of \texttt{polymake} as below. The resulting visualization is depicted in Figure \ref{fig:patch}.
	\begin{lstlisting}
tropical > $p= $C->PATCHWORK;
tropical > $p->realize->VISUAL;
	\end{lstlisting}   
	
	\begin{figure}
		\centering
		\includegraphics[width=0.35\linewidth]{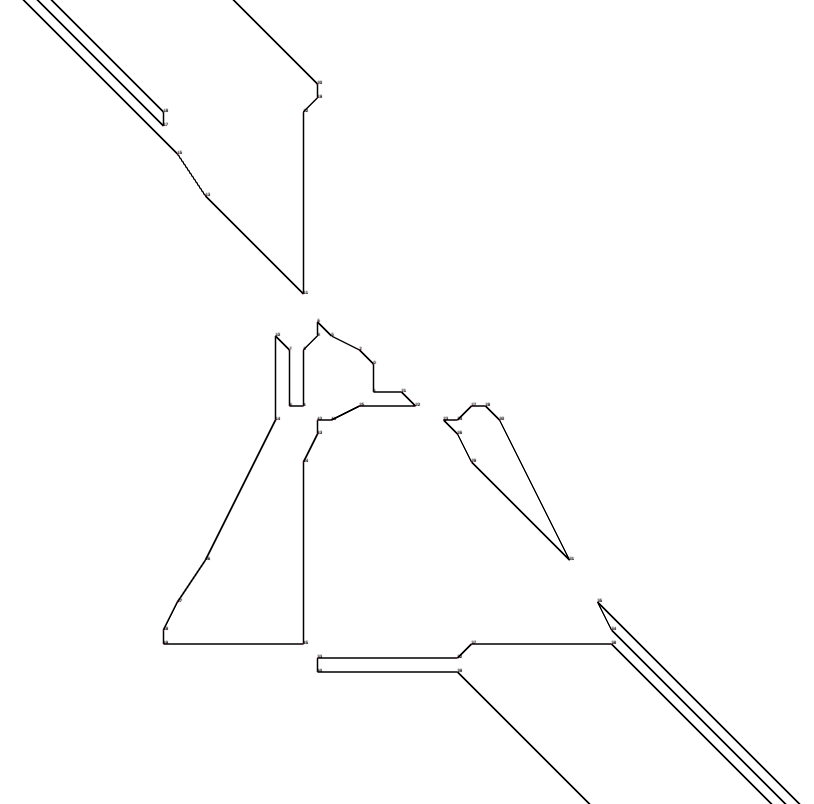}
		\caption{The picture from patchworking for the curve \texttt{\$C}.}
		\label{fig:patch}
	\end{figure}
	
	The property \verb|TWISTED_EDGES| records a list of edges in the dual subdivision of the tropical curve that is twisted with respect to the given sign distribution. It is imperative to note, that these are not edges of the curve \verb|$C|, but of its dual subdivision \verb|$C->DUAL_SUBDIVISION|.
	\begin{lstlisting}
tropical > print $C->TWISTED_EDGES;
The twisted edges in the dual subdivision are
(8 10) (5 7) (5 8) (5 13) (4 7) (4 5) (1 7) (1 5) (1 4)
	\end{lstlisting}

	\textit{Application fan.} There are few changes in this application compared to the first version of the extension \texttt{TrpicalQuarticCurves}. Since the twisted edges as well as the property to be dividing depend only on the combinatorial type, these properties can be directly computed for the underlying subdivision without the need to fix an actual tropical curve. 
	\begin{lstlisting}
fan > $S = new DualSubdivisionOfQuartic(MAXIMAL_CELLS=>[[0,1,2],[1,2,4], [2,4,12],[4,7,12],[2,8,12],[2,8,13],[8,12,13],[2,5,13],[5,9,13], [9,13,14],[7,11,12],[7,10,11],[4,7,10],[4,6,10],[3,4,6],[1,3,4]]);	
fan > $v = new Vector<Int>([1,1,1,1,1,1,1,1,1,1,1,1,1,1,1,1]);
fan > print $S->IS_DIVIDING($v);
false
fan > print $S->TWISTED_EDGES($v);
(8 13) (8 12) (4 10) (2 4) (2 12) (2 13) (2 8) (1 4) (1 2)
	\end{lstlisting}

	The methods to determine the twisted edges and whether the curve is dividing are based on Theorems from \cite{BIMS15,texier2021topologyrealalgebraiccurves} which hold more generally for any subdivision. Therefore, we included functions which are available for any \texttt{SubdivisionOfPoints}. That broadens the options of their possible application by other researchers.
	
	\begin{lstlisting}
fan > $T = new SubdivisionOfPoints(POINTS=>[[1,0,0],[1,1,0],[1,0,1],
[1,2,0],[1,1,1],[1,0,2]],MAXIMAL_CELLS=>[[0,1,2],[1,2,4],[2,4,5],
[2,3,4]]);
fan > $v = new Vector<Int>([1,1,1,1,1,1]);
fan > print twisted_edges($T,$v);
(1 2)
fan > print is_dividing($T,$v);
true
	\end{lstlisting}
	
	\paragraph{Database \texttt{polyDB}.}  The computation that was performed in order to achieve Theorem \ref{thm:main} was time expensive: For example, for the randomly chosen triangulation in the database with identifier \#766 the computation of \verb|OVALS| took 4408 seconds on a MacBook Air with Sequoia 15.3.1, Chip Apple M3 and 24 GB Memory. 
	The big advantage of a database is that all these computations can now be accessed very quickly and do not have to be computed again. Therefore, the collection \texttt{Tropical.QuarticCurves} on \texttt{polyDB} was extended as described below. The database can also be used for examples not equal to the $S_3$ representatives stored there:
	Given a unimodular triangulation $\mathcal{T}$ of $4\Delta_2$, it is possible to obtain the database identifier of the unique triangulation within collection \texttt{Tropical.QuarticCurves} that is in the $S_3$-orbit of $\mathcal{T}$ by using the function \verb|find_in_database|.
	
	\begin{lstlisting}
fan > $polydb = polyDB();
fan > $collection = $polydb->get_collection("Tropical.QuarticCurves");
fan > $S = new DualSubdivisionOfQuartic(WEIGHTS=>[6,3,1,1,0,0,3,0,1,3,14,10,8,7,7]);
fan > print find_in_database($S);
766
	\end{lstlisting}
	
	We used the original data from \cite{1GP21} and updated it according to the results in Section \ref{sec:results}. The updated data now additionally contains: 
	\begin{itemize}
		\item for non-generic triangulations
		\begin{itemize}
			\item \verb|PLUECKER_NUMBERS| the number of possible real bitangents,  
			\item \verb|SIGN_REPRESENTATIVES| an exemplary sign vector for each number of real bitangents,
		\end{itemize}
		\item the sign conditions for the bitangent classes of shape (C),
		\item the property \verb|OVALS| which contains the data that led to Theorem \ref{thm:main}, i.e., the count of the number of sign vectors for which the triangulation has a given number of ovals together with an exemplary sign vector.
	\end{itemize}
	The following shows the new property \verb|OVALS| in more detail for a downloaded file from the data base \texttt{497.json}.
	\begin{lstlisting}
fan > $S = load_data("PATH/497.json");
fan > print $S->OVALS->COUNT;
4096 6144 4096 1024
fan > print $S->OVALS->SIGNS;
1 1 1 1 1 1 1 1 1 1 1 1 1 1 1
1 1 -1 1 1 1 1 1 1 1 1 1 1 1 1
1 1 1 1 -1 1 1 1 1 1 1 1 1 1 1
1 1 1 1 1 1 1 1 1 1 -1 1 1 1 1
	\end{lstlisting}
	The vector \verb|$S->OVALS->COUNT| denotes the number of real tropical curves with the dual subdivision \verb|$S| that have $1$, $2$, $3$ or $4$ non-nested ovals as their real topology, respectively. The property \verb|$S->OVALS->SIGNS| stores representative sign vectors that provide the corresponding real topology. The case of $2$ nested ovals can be accessed separately.
	\begin{lstlisting}
fan > print $S->OVALS->COUNT_NESTED;
1024
fan > print $S->OVALS->SIGNS_NESTED;
1 1 1 1 1 1 1 -1 1 1 1 1 1 1 1
	\end{lstlisting}
	The complete data can be viewed in a collected form using the method $\verb|OVALS_INFORMATION|$.
	\begin{lstlisting}
fan > print $S->OVALS_INFORMATION;
(((1 not nested) 4096) <1 1 1 1 1 1 1 1 1 1 1 1 1 1 1>)
(((2 not nested) 6144) <1 1 -1 1 1 1 1 1 1 1 1 1 1 1 1>)
(((3 not nested) 4096) <1 1 1 1 -1 1 1 1 1 1 1 1 1 1 1>)
(((4 not nested) 1024) <1 1 1 1 1 1 1 1 1 1 -1 1 1 1 1>)
(((2 nested) 1024) <1 1 1 1 1 1 1 -1 1 1 1 1 1 1 1>)
	\end{lstlisting}
	
	The database can also be searched for all triangulations with a given property. 
	\begin{lstlisting}
fan > $polydb = polyDB();
fan > $collection = $polydb->get_collection("Tropical.QuarticCurves");
fan > print collection->count({"OVALS.COUNT.3" => 1024});
442
	\end{lstlisting}
	This means that there are $442$ subdivisions $\mathcal{T}$ for which the associated real tropical curve $(\Gamma,\delta)$ has 4 ovals for exactly $1024$ sign distributions $\delta$.
	
	\bibliographystyle{plain}
	\bibliography{main}{}
	
\end{document}